\newtheorem{definition}{\bf Definition}[section]
\newtheorem{lemma}[definition]{\bf Lemma}
\newtheorem{theorem}[definition]{\bf Theorem}
\newtheorem{proposition}[definition]{\bf Proposition}
\newtheorem{corollary}[definition]{\bf Corollary}
\newtheorem{remark}[definition]{\bf Remark}
\newtheorem{example}[definition]{\bf Example}
\begin{document}


\title[Minimal commutants, bicommutants for analytic Toeplitz operators]{Minimal commutant and double commutant property for analytic Toeplitz operators}

\author{María José  Gonz\'alez}
\address{Departamento de Matemáticas, Universidad de Cádiz, 11510 Puerto Real (Cádiz), Spain} 
\email{majose.gonzalez@uca.es}

\author{Fernando Le\'on-Saavedra}
\address{ Departamento de Matemáticas and INDESS, Universidad de Cádiz, 11405 Jerez de la Frontera (Cádiz), Spain} \email{fernando.leon@uca.es}

\subjclass{Primary 47B35; Secondary 47B38, 30J05}

\keywords{Analytic Toeplitz operator, Minimal commutant, Double commutant property.}

\date{January 1, 2004}

\begin{abstract}
In this paper, we study the minimality of the commutant of an analytic Toeplitz operator
$M_\varphi$, when $M_\varphi$ is defined on the Hardy space
$H^2(\mathbb{D})$ and $\varphi\in H^\infty(\mathbb{D})$ denotes a bounded analytic function in $\mathbb{D}$. Specifically, we show that the commutant of $M_\varphi$ is 
minimal if and only if the polynomials on $\varphi$ are weak star dense in 
$H^\infty(\mathbb{D})$, that is, $\varphi$ is a weak star generator of 
$H^\infty(\mathbb{D})$. We use our result to characterize when the double commutant
of an analytic Toeplitz operator $M_\varphi$ is minimal for a large class of
symbols $\varphi$. Specifically,
when $\varphi$ is an entire function, or more generally, when $\varphi$ belongs to the Thomson-Cowen class.
\end{abstract}

\maketitle



\section{Introduction}

Let $\mathcal{L}(H)$ be the algebra of all bounded linear operators on a
Hilbert space $H$. If $T\in \mathcal{L}(H)$, then the following algebras, where containing
$T$, arise naturally on $\mathcal{L}(H)$: $\overline{\textrm{Alg}(T)}^{WOT}$, $\{T\}'$, and $\{T\}''$. When $\textrm{Alg}(T)$ denotes the unital algebra
generated by $T$, and $\overline{\textrm{Alg}(T)}^{WOT}$  its closure in the weak operator topology. The commutant of $T$, denoted by $\{T\}'$, is the set of all operators $A$, such that $AT=TA$.  The double commutant of $T$, denoted by $\{T\}''$, is the set of all operators $A$ such that $AB=BA$ for all $B\in \{T\}'$.

In general, the following inclusions $\textrm{Alg}(T)\subset 
\{T\}''\subset \{T\}'$ are trivial, and since the algebras $\{T\}', \{T\}''$ are
closed in the weak operator topology, we also have $\overline{\textrm{Alg}(T)}^{WOT}
\subset \{T\}''\subset \{T\}'$. 
The commutant and double commutant of an operator determine its structure. Therefore, it is important to know the commutant of a given operator. For example, it is natural to ask when $\overline{\textrm{Alg}
(T)}^{WOT}=\{T\}'$, that is, when $T$ has {\it the minimal commutant property},  because when $T$ enjoys this property, then the invariant and hyperinvariant subspace lattices of $T$ coincide. In addition, $T$ is said to have the {\it double  commutant property} if its double commutant is equal to the weak closure of the polynomials on $T$.

The literature on these topics is huge. In general operator theory, we cannot talk about double commutant without citing J. Von Neumann's double commutant Theorem \cite{neumann}. 
The characterization of the norm closure of operators with the minimal commutant property remains an open problem (see \cite{herrero}). This question, which still resists, first appeared at the end of Turner's Ph.D. thesis (\cite{turner}), but also, as Herrero pointed out, it was formulated in private communication by J.B. Conway.

Within the study of special classes of linear operators, 
resolving questions regarding the commutants of analytic Toeplitz operators defined on $H^2(\mathbb{D})$ was the main focus of research in the early 1970s.

The main ancestors of this paper are the results of Abrahamse, Baker, Cowen, Deddens, Shields, Thomson, and many others (\cite{cowen,deddenswong,shieldswallen,thomson,turner}). The study of commutants and double commutants of composition operators is more recent. The first findings appeared in the 1990s \cite{carter, cload,worner}. More recently, composition operators, induced by linear fractional self-maps of the unit disk,  having the minimal commutant property \cite{mcp} and the double commutant property \cite{dcp} have been characterized.

Turner, in his Ph.D. thesis \cite{turner}, formulated the following question: 
\begin{quote}
{\bf Question 1:} For which maps $\varphi\in H^\infty(\mathbb{D})$ does the analytic Toeplitz operator $M_\varphi$ have the double commutant property?
\end{quote}
As we shall see in the present paper, Question 1 is related to the minimal commutant property for analytic Toeplitz operators. Thus, the following question arises:

\begin{quote}
{\bf Question 2:} For which maps $\varphi\in H^\infty(\mathbb{D})$ does the analytic Toeplitz operator $M_\varphi$ have the minimal commutant property?
\end{quote}

In this paper, we show that an analytic Toeplitz operator $M_\varphi$ has a minimal commutant if and only if its inducing map $\varphi$ is a weak star generator of $H^\infty(\mathbb{D})$. Recall that a map $\varphi\in H^\infty(\mathbb{D})$ is a weak star generator of $H^\infty(\mathbb{D})$ if and only if the polynomials on $\varphi$ are weak star dense in $H^\infty (\mathbb{D})$ (\cite{sarasondos,sarason}).
The reason for this phenomenon is that the closure of linear subspaces in $H^\infty(\mathbb{D})$ in some different topologies coincide. The proof uses Banach and Mazurkiewicz's sequential approach of weak star topology (\cite[Annexe]{banach}).

As a consequence of this {\it Toeplitz minimal commutant Theorem}, we could characterize when a univalent symbol $\varphi$ induces a Toeplitz operator with the double commutant property.
Moreover, with the help of Baker-Deddens-Ulmann factorization Theorem \cite{bakerdeddens} we characterize the entire functions $f$ that induce an analytic Toeplitz operator $M_f$ with the double commutant property. The result can also be extended for analytic functions $\varphi$ in the Thomson-Cowen class, that is, functions $\varphi\in H^\infty(\mathbb{D})$ such that for some $a\in\mathbb{D}$ the inner part of $\varphi(z)-\varphi(a)$ is a finite Blaschke product. The Thomson-Cowen class includes the non-constant elements in $H^\infty(\overline{\mathbb{D}})$. 

As a by-product, we can obtain a geometric condition on $\varphi$ for the operator $M_\varphi$ to have the double commutant property. Specifically,
if  $\gamma$ denotes the counterclockwise unit circle $\{e^{it}\,\,:\,\,0\leq t\leq 2\pi\}$ then for any $a\in \mathbb{D}$, such that $\varphi(a)\notin \varphi(\partial \mathbb{D})$  the winding number $n(\varphi(\gamma),\varphi(a))$ must be constant.
 
The paper is structured as follows. In Section \ref{secciondos} we show that an analytic Toeplitz operator $M_\varphi$ has the minimal commutant property if and only if $\varphi$ is a weak star generator of $H^\infty(\mathbb{D})$. 
As a first consequence, we will characterize the univalent symbols $\varphi$ so that $M_\varphi$ has the double commutant property. 
We present these results by recalling some classical ideas and motivating the reader with concrete examples when possible.

In Section \ref{secciontres} we focus our attention on symbols $\varphi\in H^\infty(\mathbb{D})$ that single cover an open neighborhood and on a classical result by Deddens and Wong (we include a new proof that uses only function theory) that determines the commutant of $M_\varphi$. These maps $\varphi$ will give us the first examples of operators $M_\varphi$ that do not have the double commutant property.

In Section \ref{seccioncuatro} we discover the first examples of non-univalent functions $\varphi\in H^\infty(\mathbb{D})$ that induce $M_\varphi$ with the double commutant property. For example, when $\varphi$ is an inner function. Although these results follow from a more general result by Turner that ensures that every non-unitary isometry has the double commutant property, we will show an alternative proof.
Finally, in Section \ref{seccioncinco}, we characterize when an entire function or a function in the Thomson-Cowen class has the double commutant property. The paper closes with a brief section with concluding remarks and open questions.

\section{Analytic Toeplitz operators with a minimal commutant}
\label{secciondos}

Pioneering results on commutants of Toeplitz operators date back to the work of Shields and Wallen \cite{shieldswallen}. They consider commutants of operators that can be seen as multiplication by $z$ in a Hilbert space of analytic functions.
In fact, Deddens and Wong \cite{deddenswong} later noticed that their method can be used to show that if $\varphi\in H^\infty(\mathbb{D})$ is univalent, then
$\{M_\varphi\}'=\{ M_h\,:\,h\in H^\infty(\mathbb{D}\}$. That is, if $\varphi\in H^\infty(\mathbb{D})$ is univalent, then $\{M_\varphi\}'=\{M_z\}'$. Moreover, they show that if $T\in \{M_\varphi\}'$ then $T$ can be approximated in the weak operator topology by polynomials in $M_z$.

However, in general, we cannot assert that if $\varphi$ is univalent, then each $T\in\{M_\varphi\}'$ can be approximated in the weak operator topology by polynomials in $M_\varphi$, that is, we cannot assert that if $\varphi$ is univalent, then $M_\varphi$ has a minimal commutant. To assert minimality in the commutant of $M_\varphi$, we need an additional hypothesis. Specifically, the polynomials on $\varphi$ must be weak star dense in $H^\infty(\mathbb{D})$. In fact, we will show that $M_\varphi$ has a minimal commutant if and only if the polynomials on $\varphi$ are weak star dense in $H^\infty(\mathbb{D})$.

As a by-product of the {\it Toeplitz minimal commutant Theorem} we characterize when a Toeplitz operator $M_\varphi$ with an univalent symbol $\varphi$ has the double commutant property.

The univalence is very important in this circle of ideas. For instance, in this theorem due to Shields and Wallen, the univalence is used in part (a).  We include a proof for the sake of completeness. Let us denote by $k_a(z)$ the Hardy reproducing kernel.

\begin{theorem}[Shields-Wallen]
\label{univalent}
Assume that $\varphi\in H^\infty(\mathbb{D})$ is univalent.
\begin{enumerate}
\item[(a)] If $T\in \{M_\varphi\}'$ then there exists $h\in H^\infty(\mathbb{D})$ such that $T=M_h$. 
\item[(b)] For each $T\in \{M_\varphi\}'$ there exists a sequence of polynomials $p_n$ such that $p_n(M_z)\to T$ in the weak operator topology.
\end{enumerate}
\end{theorem}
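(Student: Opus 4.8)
The plan for part (a) is to exploit that each reproducing kernel $k_a$ is, up to scalar, the unique (normalized) eigenvector of $M_\varphi^*$ with eigenvalue $\overline{\varphi(a)}$, a fact that hinges on univalence. Concretely, $M_\varphi^* k_a = \overline{\varphi(a)}\, k_a$, and conversely if $M_\varphi^* f = \overline{\lambda} f$ then $\langle g, f\rangle \overline{\varphi(z)} = \overline{\lambda}\langle g,f\rangle$ forces $f$ to be supported (as a functional) at the single point $\varphi^{-1}(\lambda)$, whence $f\in\operatorname{span}\{k_{\varphi^{-1}(\lambda)}\}$. Now take $T\in\{M_\varphi\}'$, so $T^*\in\{M_\varphi^*\}'$. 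Then $M_\varphi^*(T^* k_a)=T^* M_\varphi^* k_a = \overline{\varphi(a)}(T^* k_a)$, so $T^* k_a$ lies in the one-dimensional eigenspace spanned by $k_a$; write $T^* k_a = \overline{h(a)}\, k_a$ for a scalar $\overline{h(a)}$ depending on $a$. Then for any $f\in H^2$, $(Tf)(a) = \langle Tf, k_a\rangle = \langle f, T^* k_a\rangle = h(a)\langle f,k_a\rangle = h(a) f(a)$, so $T$ acts as multiplication by the function $h$; one checks $h = T\mathbf{1}$ is in $H^2$ and, since $T$ is bounded, $h\in H^\infty$ with $\|h\|_\infty = \|M_h\| = \|T\|$. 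Hence $T = M_h$.

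For part (b) the plan is to show $M_h$ is a weak-operator-topology limit of polynomials in $M_z$. The standard device is Abel/Cesàro means: if $h(z)=\sum_{k\ge 0} \hat h(k) z^k$, let $p_n$ be either the $n$-th Fej\'er mean of $h$ or the dilate $h_r(z) = h(rz)$ with $r=r_n\uparrow 1$, realized as a polynomial approximation. These satisfy $\|p_n\|_\infty \le \|h\|_\infty$ (uniform bound, crucial for the WOT argument), and $p_n \to h$ pointwise in $\mathbb{D}$, hence uniformly on compacta. To pass to WOT convergence of $p_n(M_z)=M_{p_n}$, fix $f,g\in H^2$; then $\langle M_{p_n} f, g\rangle = \int_{\mathbb{T}} p_n f \bar g$, and since $p_n f \to hf$ in $H^2$-norm (by the uniform bound plus dominated convergence, or directly since Fej\'er/Abel means converge in $H^2$ and multiplication by the bounded $f$ is continuous), we get $\langle M_{p_n}f,g\rangle \to \langle M_h f, g\rangle$. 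Uniform boundedness of $\{M_{p_n}\}$ together with convergence on a dense set then upgrades this to WOT convergence, so $p_n(M_z)\to M_h = T$ in WOT.

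The main obstacle is the rigorous justification in part (a) that univalence forces the $M_\varphi^*$-eigenspace for $\overline{\varphi(a)}$ to be exactly one-dimensional: one must rule out other functionals annihilated by $M_\varphi^* - \overline{\varphi(a)}$, which amounts to showing that a function $f\in H^2$ with $(\overline{\varphi(z)}-\overline{\varphi(a)})$-orthogonality to all of $H^2$ must be a multiple of $k_a$ — here univalence guarantees $\varphi(z)-\varphi(a)$ vanishes only at $z=a$ (to first order), so $\varphi(z)-\varphi(a) = (z-a)u(z)$ with $u$ non-vanishing, and the divisibility structure pins down $f$. Once that lemma is in hand, both parts are routine; part (b) presents no real difficulty beyond bookkeeping with the uniform norm bound needed to promote pointwise/dense convergence to WOT convergence.
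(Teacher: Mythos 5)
Your argument is essentially the paper's own proof: part (a) uses exactly the same device, namely that $T^*k_a$ lies in the eigenspace $\ker(M_\varphi^*-\overline{\varphi(a)}I)$, which univalence forces to be the span of $k_a$, yielding $T^*k_a=\overline{h(a)}k_a$ and hence $T=M_h$; part (b) likewise uses the Fej\'er means of $h$ with the uniform bound $\|p_n\|_\infty\le\|h\|_\infty$ to pass from pointwise convergence to WOT convergence (the paper tests against the total set of reproducing kernels where you invoke norm convergence of $p_nf$, a cosmetic difference). The one delicate point you flag --- that the eigenspace is genuinely one-dimensional, i.e.\ that the inner part of $\varphi-\varphi(a)$ is a single Blaschke factor --- is asserted without proof in the paper as well, so your treatment is at the same level of detail.
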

\begin{proof}
Indeed, if $T\in \{M_\varphi\}'$, then $T^\star M_\varphi^\star=M_\varphi^\star T^\star$.
Observing that $M_\varphi^\star k_a=\overline{\varphi(a)}k_a$ we get
$$
M_\varphi^\star T^\star k_a=\overline{\varphi(a)}T^\star k_a.
$$
Since $\textrm{ker}(M_\varphi^\star-\overline{\varphi(a)} I )$ is one-dimensional, we find that there exists some complex number $\overline{h(a)}$ such that $T^\star k_a=\overline{h(a)}k_a$ for each $a\in \mathbb{D}$.

Since the function $a\in \mathbb{D}\to k_a\in H^2(\mathbb{D})$ is analytic on $\mathbb{D}$, we find that $h$ is also analytic on $\mathbb{D}$.  Moreover, since $T^\star$ is bounded, we get $h\in H^\infty(\mathbb{D})$. 
 
Finally, we consider the analytic Toeplitz operator $M_h$, and we see that $T^\star k_a=M_h^\star k_a$ for all $a\in \mathbb{D}$. Since $\{k_a(z)\,:\,a\in\mathbb{D}\}$ is a total set in $H^2(\mathbb{D})$, we obtain $T=M_h$ which proves (a).

Set $T\in \{M_\varphi\}'$, since $\varphi$ is univalent, there exists $h\in H^\infty(\mathbb{D})$ such that $T=M_h$.
For $h\in H^\infty(\mathbb{D})$ we consider $(p_n(z))$ the Féj\`er polynomials of $h$. Hence $p_n(z)\to h(z)$ uniformly on compact subsets of $\mathbb{D}$ and $\|p_n\|_\infty \leq \|h\|_\infty$, $n\geq 1$. Therefore, for each $a\in \mathbb{D}$ we obtain:
$$
\langle p_n(M_z)f, k_a\rangle=p_n(a) f(a)\to h(a) f(a) =\langle M_h f,k_a\rangle.
$$
The above equality is also true for linear finite combinations of the set $D=\{k_a\,:\,a\in\mathbb{D}\}$. Since $D$ is a total set, given $g\in H^2(\mathbb{D})$, there exists a sequence $g_l$ of linear combinations of the set $D$ such that $\|g-g_l\|_2\to 0$.

We have  that $\|p_n(M_z)\|=\|p_n\|_\infty\leq \|h\|_\infty$, therefore:
\begin{eqnarray*}
\lim_n \langle p_n(M_z)f,g\rangle=\lim_n\lim_l\langle p_n(M_z)f,g_l\rangle= \lim_l \langle M_h f,g_l\rangle= \langle M_hf,g\rangle,
\end{eqnarray*}
that is, $p_n(M_z)\to M_h$ in the weak operator topology, as we desired.
\end{proof}

We wish to characterize the analytic Toeplitz operators $M_\varphi$ with a minimal commutant. The next results show that univalence in $\varphi$ is a necessary condition.

\begin{proposition}
\label{prop1}
If $M_\varphi$ has a minimal commutant then $\varphi$ must be univalent.
\end{proposition}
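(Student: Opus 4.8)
The plan is to prove the contrapositive: if $\varphi$ is not univalent, then $M_\varphi$ does not have the minimal commutant property. So assume there exist two distinct points $a, b \in \mathbb{D}$ with $\varphi(a) = \varphi(b)$. The idea is to produce an operator $T$ in $\{M_\varphi\}'$ that cannot be approximated in the weak operator topology by polynomials in $M_\varphi$, thereby witnessing the failure of minimality. A natural candidate comes from the fact that the reproducing kernels $k_a$ and $k_b$ both lie in a common eigenspace: since $M_\varphi^\star k_a = \overline{\varphi(a)}\,k_a$ and $M_\varphi^\star k_b = \overline{\varphi(b)}\,k_b = \overline{\varphi(a)}\,k_b$, the eigenspace $\ker(M_\varphi^\star - \overline{\varphi(a)}I)$ is at least two-dimensional. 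This gives room to build an operator commuting with $M_\varphi$ that is \emph{not} an analytic Toeplitz operator $M_h$ — and in particular not a weak-operator limit of polynomials in $M_\varphi$, since every such limit is of the form $M_h$ (any weak limit of analytic Toeplitz operators is again analytic Toeplitz).

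Concretely, I would define $T$ on $H^2(\mathbb{D})$ via its adjoint by declaring $T^\star k_b = k_a$, $T^\star k_c = 0$ for $c \ne b$ in some suitable total subset, or more robustly, take $T^\star$ to be a rank-one (or finite-rank) operator mapping into the eigenspace: for instance $T^\star = k_a \otimes v$ where $v$ is chosen orthogonal to $\ker(M_\varphi^\star - \overline{\varphi(a)}I)^{\perp}$-complement appropriately, so that $T^\star$ maps $H^2$ into the $\overline{\varphi(a)}$-eigenspace of $M_\varphi^\star$. For such $T^\star$ one checks directly that $M_\varphi^\star T^\star = \overline{\varphi(a)}\,T^\star = T^\star M_\varphi^\star$, hence $T \in \{M_\varphi\}'$. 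On the other hand, if $T = M_h$ for some $h \in H^\infty(\mathbb{D})$, then $T^\star k_a = \overline{h(a)}\,k_a$, i.e. $T^\star$ acts as a scalar on each kernel $k_a$; choosing the rank-one $T^\star$ above so that it does \emph{not} act as a scalar on some $k_c$ (possible precisely because the eigenspace has dimension $\ge 2$) shows $T \ne M_h$ for any $h$. Since $\overline{\mathrm{Alg}(M_\varphi)}^{WOT} \subseteq \{M_h : h \in H^\infty(\mathbb{D})\}$ (a weak limit of polynomials in $M_\varphi$ is a multiplication operator, commutes with $M_z$, hence by the argument in Theorem~\ref{univalent}(a)-style reasoning is some $M_h$), this $T$ is in $\{M_\varphi\}'$ but not in $\overline{\mathrm{Alg}(M_\varphi)}^{WOT}$, so the commutant is not minimal.

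The main obstacle is making the construction of $T$ rigorous as a \emph{bounded} operator on $H^2(\mathbb{D})$ and confirming it genuinely commutes with $M_\varphi$ rather than merely with $M_\varphi^\star$ on the span of kernels — one must verify the kernels $\{k_c : c \in \mathbb{D}\}$ form a total set and that the defining relation on them extends to a bounded $T^\star$ (a finite-rank specification handles boundedness trivially, which is why I favor the rank-one/finite-rank version over a diagonal prescription). A secondary point requiring care is the claim that $\overline{\mathrm{Alg}(M_\varphi)}^{WOT}$ consists only of analytic Toeplitz operators: this follows because each polynomial $p(M_\varphi)$ equals $M_{p\circ\varphi}$ with $\|M_{p\circ\varphi}\| = \|p\circ\varphi\|_\infty$, and a WOT-convergent net of such operators converges to a bounded operator that still commutes with $M_z$ (as $M_z$ commutes with every $M_\varphi$), hence is of the form $M_h$ by the reproducing-kernel argument already used in the proof of Theorem~\ref{univalent}(a). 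Once these two points are in place, the contradiction is immediate and the proposition follows.
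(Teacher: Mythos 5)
Your overall strategy --- exhibit an operator in $\{M_\varphi\}'$ that is not an analytic Toeplitz operator --- cannot work in general, and the concrete construction you propose does not actually produce a commuting operator. On the construction: if $T^\star = k_a\otimes v$, i.e.\ $T^\star f=\langle f,v\rangle k_a$, then $M_\varphi^\star T^\star=\overline{\varphi(a)}\,T^\star$ does hold, but $T^\star M_\varphi^\star f=\langle f,M_\varphi v\rangle k_a$, so the required identity $T^\star M_\varphi^\star=\overline{\varphi(a)}\,T^\star$ forces $M_\varphi v=\varphi(a)v$; since $(\varphi-\varphi(a))v\equiv 0$ implies $v\equiv 0$ for non-constant $\varphi$, no admissible $v$ exists. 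The deeper obstruction is that a two-dimensional eigenspace of $M_\varphi^\star$ does not yield non-Toeplitz elements of the commutant: $M_\varphi^\star$ is not normal, so an operator supported on that eigenspace need not commute with $M_\varphi^\star$ off it. Indeed, by the Deddens--Wong theorem (Theorem \ref{deddeswong}), whenever $\varphi$ singly covers a neighborhood of $\varphi(\mathbb{D})$ --- for instance the non-univalent cardioid map $\varphi(z)=(z+1/2)^2$ --- one has $\{M_\varphi\}'=\{M_z\}'=\{M_h\,:\,h\in H^\infty(\mathbb{D})\}$, so \emph{every} element of the commutant is an analytic Toeplitz operator. For such $\varphi$ your plan produces no witness at all, yet the proposition must still cover them.

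The repair is to take as witness a multiplication operator that is always in the commutant, namely $M_z$, and to show it is not in $\overline{\textrm{Alg}(M_\varphi)}^{WOT}$. This is what the paper does (phrased with adjoints and normalized kernels): if a net $p_d(M_\varphi)$ converged to $M_z$ in the weak operator topology, then testing against $1$ and the reproducing kernels would give $p_d(\varphi(a))=\langle p_d(M_\varphi)1,k_a\rangle\to a$ and $p_d(\varphi(b))\to b$; since $\varphi(a)=\varphi(b)$ these are the same net of scalars, contradicting $a\neq b$. Your secondary observation --- that every WOT limit of polynomials in $M_\varphi$ commutes with $M_z$ and is therefore some $M_h$ --- is correct, but it becomes unnecessary once the witness is chosen to be $M_z$ itself.
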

\begin{proof} Indeed, if $\varphi$ is not univalent, there exists two different points in $\mathbb{D}$, $a$ and $b$, such that $\varphi(a)=\varphi(b)=c$. 

Recall that an operator $T$ has minimal commutant if and only if $T^\star$ has minimal commutant.  We will conclude if we prove that $M_\varphi^\star$ does not have a minimal commutant. To show that, we will see that $M_z^\star\notin \overline{\textrm{Alg}(M_\varphi^\star)}^{WOT}$.

Firstly, let us observe that $$\overline{\textrm{Alg}(M_\varphi^\star)}^{WOT}=\overline{\textrm{Alg}(M_\varphi^\star-cI)}^{WOT}=\overline{\textrm{Alg}(M_{\varphi-c}^\star)}^{WOT}.$$ In particular, for $c=\varphi(a)$. Set $\phi=\varphi-\varphi(a)$.

By way of contradiction, let us assume that there exists a net $p_d(M_\phi^\star)$, $d\in D$ such that
$p_d(M_\phi^\star)\rightarrow M_{z}^\star$. Let us denote by $K_a$ the reproducing kernel at $a$, normalized. Since $M_\phi^\star (K_a)=M_\phi^\star (K_b)=0$ then
$$
\lim_{d\in D} \langle p_d(M_\phi^\star)K_a,K_a\rangle=\lim_{d\in D} p_d(0)=\langle M_z^\star K_a,K_a \rangle=\overline{a}\cdot \|K_a\|^2=\overline{a}.
$$
On the other hand 
$$
\lim_{d\in D} \langle p_d(M_\phi^\star)K_b,K_b\rangle=\lim_{d\in D} p_d(0)=\langle M_z^\star K_b,K_b \rangle=\overline{b},
$$
that is $\lim_dp_d(0)=\overline{a}\neq \overline{b}=\lim_dp_d(0)$, a contradiction.
\end{proof}

The space $H^\infty(\mathbb{D})$ is a closed subspace of $L^\infty(\partial \mathbb{D})$. Thus, $H^\infty(\mathbb{D})$ can be seen as the dual of a quotient of $L^1(\partial \mathbb{D})$.
Since $H^\infty(\mathbb{D})$ is the dual of a separable Banach space,  it is possible to define a weak star topology in $H^\infty(\mathbb{D})$ (see \cite{sarason}). Let us denote this topology by $\tau_{\star}$.

A function $\varphi\in H^\infty(\mathbb{D})$ is said to be a weak star generator of $H^\infty(\mathbb{D})$ provided the polynomials on $\varphi$ are weak star dense in $H^\infty(\mathbb{D})$.

The main result of this section involves some technicalities when dealing with topologies defined on $H^\infty(\mathbb{D})$.
Assume that $\varphi$ is univalent. Applying Proposition \ref{prop1}, if $M_{h_\alpha}$ is a net converging to an operator $T$ in the weak operator topology, then $T=M_h$ for some $h\in H^\infty(\mathbb{D})$.  In $H^\infty(\mathbb{D})$, we can also consider the relative topology induced by the weak operator topology. If $g\in H^\infty(\mathbb{D})$ a basic neighborhood of $g$ is defined as:
$$
V_{wot}(g;f_1,f_2;\varepsilon)=\{h\in H^\infty(\mathbb{D})\, :\, |\langle (M_h-M_g)f_1,f_2\rangle|<\varepsilon\}
$$
for some $f_1,f_2\in H^2(\mathbb{D})$. Let us denote this topology by $\tau_{wot}$.

Moreover, if $\varphi$ is univalent, then by Proposition \ref{prop1} the WOT closure of the polynomials on $M_{\varphi}$ is a subspace of $\{M_z\}'$. Thus, it can be identified as a subspace of $H^\infty(\mathbb{D})$, which is exactly the $\tau_{wot}$-closure of the polynomials on $\varphi$.

Analogously, we can make a similar observation regarding a stronger topology on $\mathcal{L}(H^2(\mathbb{D}))$: the $\sigma$-weak star (or ultra-weak star) operator topology.
Let us denote this topology by $\tau_{\sigma *}$. If $g\in H^\infty(\mathbb{D})$ then a basic neighborhood of $g$ on $\tau_{\sigma*}$ is defined as:
$$
V_{\sigma *}(g;(f_n)_n,(g_n)_n;\varepsilon)=\left\{h\in H^\infty\,:\, \left|\sum_{n=1}^\infty \langle (M_g-M_h)f_n,g_n \rangle \right|<\varepsilon\right\}
$$
when $f_n,g_n\in H^2(\mathbb{D})$ and $\sum_{n=1}^\infty \|f_n\|^2<\infty$ and $\sum_{n=1}^\infty \|g_n\|^2<\infty$ (here $\|\cdot\|$ denotes the norm on $H^2(\mathbb{D})$). 
We refer the reader to Takesaki's book \cite{takesaki} where these operator topologies are described.

\begin{lemma}
\label{topologies}
The weak operator topology and the $\sigma$-weak star operator topology restricted to $H^\infty(\mathbb{D})$ coincide.
\end{lemma}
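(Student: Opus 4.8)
The plan is to realise both topologies, restricted to $H^\infty(\mathbb{D})$ via $h\mapsto M_h$, as weak topologies generated by linear functionals of the form $\eta\mapsto\int_{\partial\mathbb{D}}\eta\,k\,dm$ with $k\in L^1(\partial\mathbb{D})$, and then to check that the two generating families of functionals are the same. The inclusion $\tau_{wot}\subseteq\tau_{\sigma*}$ is immediate, since a basic $\tau_{wot}$-neighborhood is the special case of a basic $\tau_{\sigma*}$-neighborhood in which all but finitely many of the $f_n,g_n$ vanish. For the reverse inclusion it is enough to show that every basic $\tau_{\sigma*}$-neighborhood is already a basic $\tau_{wot}$-neighborhood. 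Given $(f_n),(g_n)$ in $H^2(\mathbb{D})$ with $\sum_n\|f_n\|^2<\infty$ and $\sum_n\|g_n\|^2<\infty$, set $k:=\sum_n f_n\overline{g_n}$; the series converges in $L^1(\partial\mathbb{D})$ with $\|k\|_1\le(\sum_n\|f_n\|^2)^{1/2}(\sum_n\|g_n\|^2)^{1/2}$, and $\sum_n\langle M_\eta f_n,g_n\rangle=\int_{\partial\mathbb{D}}\eta\,k\,dm$ for every $\eta\in H^\infty(\mathbb{D})$. So the task reduces to producing $a,c\in H^2(\mathbb{D})$ with $\int_{\partial\mathbb{D}}\eta\,k\,dm=\langle M_\eta a,c\rangle$ for all $\eta\in H^\infty(\mathbb{D})$; then $V_{\sigma*}(g;(f_n),(g_n);\varepsilon)=V_{wot}(g;a,c;\varepsilon)$ for every $g$ and $\varepsilon$.

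The key device is an outer factorization. Assuming not every $f_n$ and not every $g_n$ vanishes (otherwise $k=0$ and there is nothing to prove), $\Phi:=\sum_n|f_n|^2$ lies in $L^1(\partial\mathbb{D})$ with $\int_{\partial\mathbb{D}}\Phi\,dm=\sum_n\|f_n\|^2$, and it is log-integrable: $\log^+\Phi\le\Phi\in L^1$, while $\Phi\ge|f_{n_0}|^2$ for some $f_{n_0}\in H^2(\mathbb{D})\setminus\{0\}$ gives $\log^-\Phi\le 2\log^-|f_{n_0}|\in L^1$, using that the boundary modulus of a nonzero Hardy function is log-integrable. Hence there is an outer $\phi\in H^2(\mathbb{D})$ with $|\phi|^2=\Phi$ a.e.\ on $\partial\mathbb{D}$, and likewise an outer $\psi\in H^2(\mathbb{D})$ with $|\psi|^2=\sum_n|g_n|^2$ a.e. By the pointwise Cauchy--Schwarz inequality, $|k|=|\sum_n f_n\overline{g_n}|\le(\sum_n|f_n|^2)^{1/2}(\sum_n|g_n|^2)^{1/2}=|\phi\psi|$ a.e.; since $\phi\psi$ is outer it is nonzero a.e., so $w:=k/(\phi\psi)$ defines an element of $L^\infty(\partial\mathbb{D})$ with $\|w\|_\infty\le 1$, and $k=w\,\phi\psi$.

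Now the computation is short. For $\eta\in H^\infty(\mathbb{D})$ one has $\eta\phi\in H^2(\mathbb{D})$ and $w\psi\in L^2(\partial\mathbb{D})$, so
$$\int_{\partial\mathbb{D}}\eta\,k\,dm=\int_{\partial\mathbb{D}}(\eta\phi)(w\psi)\,dm=\langle \eta\phi,\,\overline{w\psi}\,\rangle_{L^{2}(\partial\mathbb{D})}=\langle \eta\phi,\,P_{H^{2}}\overline{w\psi}\,\rangle_{L^{2}(\partial\mathbb{D})},$$
where $P_{H^2}$ is the orthogonal projection of $L^2(\partial\mathbb{D})$ onto $H^2(\mathbb{D})$ and the last equality holds because $(I-P_{H^2})\overline{w\psi}$ is orthogonal to $H^2(\mathbb{D})\ni\eta\phi$. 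Thus, putting $r:=P_{H^2}\overline{w\psi}\in H^2(\mathbb{D})$, which does not depend on $\eta$, we obtain $\int_{\partial\mathbb{D}}\eta\,k\,dm=\langle \eta\phi,r\rangle=\langle M_\eta\phi,r\rangle$ for every $\eta\in H^\infty(\mathbb{D})$. This is the required identity with $a=\phi$, $c=r$, so that $V_{\sigma*}(g;(f_n),(g_n);\varepsilon)=V_{wot}(g;\phi,r;\varepsilon)$ for all $g$ and $\varepsilon$; hence $\tau_{\sigma*}\subseteq\tau_{wot}$, which together with the easy inclusion proves the lemma. The only non-formal step is the reduction, via log-integrability and Cauchy--Schwarz, from the infinite family $(f_n),(g_n)$ to the two outer functions $\phi,\psi$ together with the single bounded multiplier $w$; once that is in place, everything else is routine Hardy-space bookkeeping.
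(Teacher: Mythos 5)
Your proof is correct, and while it follows the same overall strategy as the paper (reduce a basic $\tau_{\sigma*}$-neighborhood to a basic $\tau_{wot}$-neighborhood by rewriting the functional $\eta\mapsto\sum_n\langle \eta f_n,g_n\rangle$ as a single pairing $\langle M_\eta a,c\rangle$ with $a,c\in H^2(\mathbb{D})$), the factorization device at the heart of the argument is genuinely different. The paper treats $F=\sum_n f_n\overline{g_n}$ as an element of $H^1(\mathbb{D})$ and invokes the Riesz--Blaschke factorization of an $H^1$ function into a product of two $H^2$ functions; you instead only use that $k=\sum_n f_n\overline{g_n}$ lies in $L^1(\partial\mathbb{D})$, dominate $|k|$ pointwise by $|\phi\psi|$ with $\phi,\psi$ outer functions of prescribed moduli $(\sum_n|f_n|^2)^{1/2}$ and $(\sum_n|g_n|^2)^{1/2}$, absorb the ratio into a multiplier $w\in L^\infty$ with $\|w\|_\infty\le 1$, and then project $\overline{w\psi}$ back onto $H^2(\mathbb{D})$. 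This buys real robustness: since $\overline{g_n}$ is conjugate-analytic, $F$ is in general only an $L^1$ boundary function and need not lie in $H^1(\mathbb{D})$ (already $f_1=1$, $g_1=z$ gives $F=\bar z$), so the paper's appeal to a Blaschke factorization of $F$ is delicate, whereas your argument sidesteps the issue entirely. Your log-integrability check for $\sum_n|f_n|^2$ (bounding $\log^-$ by that of a single nonzero $f_{n_0}$ and $\log^+$ by the function itself) is exactly what is needed for the outer functions to exist, and the final identification $V_{\sigma*}(g;(f_n),(g_n);\varepsilon)=V_{wot}(g;\phi,r;\varepsilon)$ is an equality of sets, which is cleaner than an inclusion of neighborhoods. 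The only cost is the extra apparatus of outer functions and the Riesz projection, which the paper's shorter route avoids when $F$ really is analytic.
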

\begin{proof}
In general, the $\sigma$-weak star operator topology is stronger than the weak operator topology. Hence,
it is sufficient to show that each $\tau_{\sigma *}$ basic neighborhood of $0$  is a $\tau_{wot}$ neighborhood of $0$. A basic $\tau_{\sigma *}$ neighborhood of $0$ is defined as:
$$
V_{\sigma *}(0;(f_n),(g_n);\varepsilon)=\left\{h\in H^\infty \,:\, \left|\sum_{n=1}^\infty\langle hf_n,g_n\rangle \right|<\varepsilon\right\}.
$$
where $\sum_{n=1}^\infty \|f_n\|^2<\infty$ and $\sum_{n=1}^\infty\|g_n\|^2<\infty$.

If $\|\cdot\|_1$ denotes the standard norm in $H^1(\mathbb{D})$ then by applying Holder inequalities we get:
\begin{eqnarray*}
\left\|\sum_{n=1}^\infty f_n \overline{g_n} \right\|_1 &\leq & \sum_{n=1}^\infty \|f_n\overline{g_n}\|_1 \\
&\leq & \sum_{n=1}^\infty \|f_n\| \|g_n\| \\
&\leq & \left(\sum_{n=1}^\infty \|f_n\|^2\right)^{1/2}  \left(\sum_{n=1}^\infty \|g_n\|^2\right)^{1/2} <\infty.
\end{eqnarray*}
This implies that $F(z)=\sum_{n=1}^\infty f_n\overline{g_n}\in H^1(\mathbb{D})$ and by Fubini's Theorem we can exchange the sum and the integral:
$$
\left|\sum_{n=1}^\infty\langle hf_n,g_n\rangle \right|=\left|\int_{\partial\mathbb{D}} h(z)F(z)\,dz\right|.
$$
Le us denote by $B$ a Blaschke product with the same zeros of $F$. Since $g(z)=F(z)/B(z)$ has no zeros, we can write $F(z)=B(z) g(z)^{1/2} g(z)^{1/2}$. And since $F\in H^1(\mathbb{D})$,
we see that $B(z)g(z)^{1/2}$ and $g(z)^{1/2}$ are functions of $H^{2}(\mathbb{D})$. Therefore: 
$$
\left|\int_{\partial\mathbb{D}} h(z)F(z)\,dz\right|=\left|\langle h(z)B(z)g(z)^{1/2},\overline{g}^{1/2}\rangle\right|.
$$
Hence, 
$$
V_{\sigma *}(0,(f_n),(g_n);\varepsilon)=V_{wot}(0; B g^{1/2}, \overline{g}^{1/2}; \varepsilon)
$$
which yields the desired result.
\end{proof}

\begin{lemma}
\label{trans}
Assume that $h\in H^\infty(\mathbb{D})$. The following conditions are equivalent:
\begin{enumerate}
\item[a)]  $h\in H^\infty(\mathbb{D})$ is weak star limit of  $p_n(\varphi)$. 
\item[b)]  $h\in H^\infty(\mathbb{D})$ is the WOT limit of $p_n(M_\varphi)$.
\item[c)] $h\in H^\infty(\mathbb{D})$ is the $\sigma$ weak star limit of $p_n(M_\varphi)$.

\end{enumerate}

\end{lemma}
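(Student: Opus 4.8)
The plan is to route all three notions of convergence through the elementary notion of pointwise convergence on $\mathbb{D}$. First I would observe that in each of (a), (b), (c) the sequence involved is automatically norm bounded: a weak star convergent sequence in the dual space $H^\infty(\mathbb{D})$ is bounded by the uniform boundedness principle, and a WOT- or $\sigma$-weak star convergent sequence of the operators $p_n(M_\varphi)=M_{p_n(\varphi)}$ is bounded, whence $\sup_n\|p_n(\varphi)\|_\infty=\sup_n\|M_{p_n(\varphi)}\|<\infty$. (No hypothesis on $\varphi$ is needed here: each $p_n(M_\varphi)$ lies in the WOT-closed algebra $\{M_z\}'$, so any WOT limit of them is an analytic Toeplitz operator.) Working under the standing assumption $\sup_n\|p_n(\varphi)\|_\infty=:C<\infty$, the equivalence of (b) and (c) is immediate from Lemma \ref{topologies}, since the operators $p_n(M_\varphi)$ and $M_h$ all belong to the subset $\{M_g:g\in H^\infty(\mathbb{D})\}$ of $\mathcal{L}(H^2(\mathbb{D}))$, on which $\tau_{wot}$ and $\tau_{\sigma*}$ agree.

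For (a) $\Leftrightarrow$ (b) I would prove that, for a $C$-bounded sequence $(g_n)$ in $H^\infty(\mathbb{D})$ and a fixed $h\in H^\infty(\mathbb{D})$, each of the statements ``$g_n\to h$ in $\tau_\star$'' and ``$M_{g_n}\to M_h$ in $\tau_{wot}$'' is equivalent to ``$g_n(z)\to h(z)$ for every $z\in\mathbb{D}$''; taking $g_n=p_n(\varphi)$ then yields (a) $\Leftrightarrow$ (b), witnessed by the very same sequence $(p_n)$. One direction of each equivalence is clear: point evaluations $g\mapsto g(a)$ are $\tau_\star$-continuous (they are represented by classes in the predual $L^1(\partial\mathbb{D})/H^1_0$, where $H^1_0=\{f\in H^1(\mathbb{D}):f(0)=0\}$; see \cite{sarason}), and $g_n(a)=\langle M_{g_n}1,k_a\rangle\to\langle M_h1,k_a\rangle=h(a)$ gives ``$\tau_{wot}\Rightarrow$ pointwise''. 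For the converses I would use boundedness together with a total family: since the point evaluations separate $H^\infty(\mathbb{D})$, their linear span is dense in the predual, so a $C$-bounded sequence converging on all of them converges in $\tau_\star$ --- this is the classical fact that on norm-bounded subsets of $H^\infty(\mathbb{D})$ the weak star topology coincides with pointwise convergence on $\mathbb{D}$ (equivalently, by Vitali, with uniform convergence on compact subsets). For $\tau_{wot}$: given $f_1,f_2\in H^2(\mathbb{D})$, the identity $\langle M_{g_n}f_1,k_a\rangle=g_n(a)f_1(a)\to h(a)f_1(a)$ handles $f_2=k_a$, and a $3\varepsilon$-argument using $\|M_{g_n}\|\le C$ and the density of finite linear combinations of $\{k_a:a\in\mathbb{D}\}$ in $H^2(\mathbb{D})$ extends it to arbitrary $f_2$ --- precisely the computation already carried out in the proof of Theorem \ref{univalent}(b).

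The one genuinely delicate point, and the place I would be careful, is the behaviour of the weak star topology $\tau_\star$: that on bounded sets it is governed by the separating family of point evaluations, and that this legitimises arguing purely with sequences rather than nets --- here the metrizability of bounded subsets in the weak star topology of the dual of a separable Banach space (Banach--Mazurkiewicz) is what is being used. Everything else is routine bookkeeping with reproducing kernels, the identity $\|M_g\|=\|g\|_\infty$, and the uniform boundedness principle.
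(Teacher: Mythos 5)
Your proposal is correct and follows essentially the same route as the paper: the equivalence of (b) and (c) is read off from Lemma \ref{topologies}, and the equivalence of (a) and (b) is reduced to pointwise convergence on $\mathbb{D}$ together with uniform boundedness (obtained from the uniform boundedness principle), using the reproducing kernels and the $\varepsilon$-argument already present in Theorem \ref{univalent}, and Sarason's characterization of weak star convergent sequences in $H^\infty(\mathbb{D})$. The only difference is presentational: you extract the norm bound once at the outset for all three modes of convergence, whereas the paper derives it inside each implication.
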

\begin{proof}
The equivalence between b) and c) follows from the Lemma \ref{topologies}.
To establish a) implies b), assume that $h$ is a weak star limit of polynomials on $\varphi$, then (\cite[Lemma 1]{sarason}) there exists a sequence of polynomials $p_n$, such that, $p_n(\varphi)(a)\to h(a)$ for any $a\in \mathbb{D}$ and $\|p_n(\varphi)\|_\infty<M$ for all $n$.
Given $f,g\in H^2(\mathbb{D})$ and $\varepsilon>0$, we wish to prove that there exists $n_0$ such that
$|\langle (M_{p_n(\varphi)}-M_h)f,g\rangle|<\varepsilon $ for all $n\geq n_0$.

 Since the set of reproducing kernels $\{k_a(z)\, :\,a\in \mathbb{D}\}$ is a total set in $H^2(\mathbb{D})$, there exists a finite linear combinations of reproducing kernels $g_\varepsilon$ such that 
\begin{equation}
\label{desigualdad1}
\|g-g_{\varepsilon}\|\leq \frac{\varepsilon}{2\|f\|(M+\|h\|_\infty)}.
\end{equation}
The pointwise convergence of the hypothesis implies that $\langle (M_{p_n(\varphi)}-M_h)f,k_a \rangle$ converges to zero. Therefore, there exists $n_0$ such that for $n\geq n_0$
\begin{equation}
\label{desigualdad2}
|\langle (M_{p_n(\varphi)}-M_h)f,g_\varepsilon \rangle|<\varepsilon/2.
\end{equation}
Finally, by using (\ref{desigualdad1}) and (\ref{desigualdad2}) and the triangular inequality, we get that for any $n\geq n_0$
\begin{eqnarray*}
|\langle (M_{p_n(\varphi)}-M_h)f,g\rangle| &\leq & |\langle (M_{p_n(\varphi)}-M_h)f,g_\varepsilon \rangle|+|\langle (M_{p_n(\varphi)}-M_h)f,g-g_\varepsilon \rangle|\\
&\leq & \varepsilon/2+ \frac{\varepsilon}{2} \frac{\|p_n(\varphi)-h\|_\infty}{M+\|h\|_\infty}  \leq \varepsilon.
\end{eqnarray*}

Conversely, if $M_{p_n(\varphi)}\to M_h$ in WOT then $\langle p_n(\varphi), k_a\rangle=p_n(\varphi(a))\to h(a)=\langle h,k_a\rangle$.
On the other hand, if $\langle M_{p_n(\varphi)}f, g\rangle\to \langle M_hf,g\rangle$ for any $f,g\in H^2(\mathbb{D})$ then $M_{p_n(\varphi)}f\to M_hf$ weakly. Thus, $M_{p_n(\varphi)}f$ is pointwise bounded for any $f\in H^2(\mathbb{D})$. Therefore, by the uniform boundedness principle we get $\|M_{p_n(\varphi)}\|=\|p_n(\varphi)\|_\infty<M$ for all $n$. That is, $p_n(\varphi)\to h$ in the weak star topology, which establishes the equivalence between a) and b).
\end{proof}
\begin{remark}
\label{trans2}
The proof of the above result gives us a little more. Specifically, on $H^\infty(\mathbb{D})$, the three topologies have the same convergent sequences.
That is,  $\psi_n\in H^\infty(\mathbb{D})$ converges weak star to $h$ if and only if $M_{\psi_n}$ converges in the weak operator topology to $M_h$.
\end{remark}

To show Theorem \ref{main1} we will use the sequential approach of the weak star topology introduced by Banach and Mazurkiewicz (\cite[Annexe]{banach}).
If $\mathcal{P}$ is a linear subspace of a Banach space $X$ with separable predual, then the weak star closure of $\mathcal{P}$ can be recovered by taking limits on $\mathcal{P}$ in a transfinite way. Specifically, if $B$ is the unit ball of $X$,
the derived set of $\mathcal{P}$ is defined as:
$$
\mathcal{P}^{(1)}=\bigcup_{n=1}^\infty \overline{\mathcal{P}\cap n B}^{*},
$$
which represents the weak star limit of sequences in $\mathcal{P}$. In general, it is well known that $\mathcal{P}^{(1)}$ does not coincide with the weak star closure of $\mathcal{P}$. In this way, it is natural to introduce derived sets for any ordinal number as follows: if $\mathcal{P}^{(\alpha)}$ has already been defined, then $\mathcal{P}^{(\alpha+1)}=(\mathcal{P}^{(\alpha)})^{(1)}$.

When the predual of $X$ is separable, then there exists an ordinal $\beta$ such that $\mathcal{P}^{(\beta)}=\mathcal{P}^{(\beta+1)}=\overline{\mathcal{P}}^{*}$. That is, we can recover the weak star closure of a subspace by taking derived sets in a transfinite way. Now we are in a position to show the main result of this section.

\begin{theorem}
\label{main1}
Set $\varphi\in H^\infty(\mathbb{D})$. $M_\varphi$ has a minimal commutant if and only if $\varphi$ is a weak star generator of $H^\infty(\mathbb{D})$.
\end{theorem}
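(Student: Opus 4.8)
The plan is to prove the two implications separately, using the topological identifications set up in Lemmas \ref{topologies} and \ref{trans} together with the transfinite derived-set machinery of Banach--Mazurkiewicz.

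\textbf{The easy direction.} Suppose $M_\varphi$ has a minimal commutant. By Proposition \ref{prop1}, $\varphi$ is univalent, so by Theorem \ref{univalent}(a) the commutant $\{M_\varphi\}'$ is (identified with) all of $H^\infty(\mathbb{D})$, while minimality says $\overline{\mathrm{Alg}(M_\varphi)}^{WOT} = \{M_\varphi\}'$. Under the identification of the WOT-closure of the polynomials in $M_\varphi$ with the $\tau_{wot}$-closure of the polynomials in $\varphi$ (noted right after Proposition \ref{prop1}), minimality says precisely that the polynomials in $\varphi$ are $\tau_{wot}$-dense in $H^\infty(\mathbb{D})$. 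Since a weak-star-convergent \emph{net} of polynomials in $\varphi$ is, by Lemma \ref{trans} (a)$\Leftrightarrow$(b), exactly a $\tau_{wot}$-convergent net — and more to the point the $\tau_{wot}$-closure and $\tau_{\star}$-closure of a linear subspace of $H^\infty(\mathbb{D})$ agree — we conclude the polynomials in $\varphi$ are $\tau_\star$-dense, i.e.\ $\varphi$ is a weak star generator. The one subtlety is that Lemma \ref{trans} is stated for sequences, so to pass from "WOT-dense" to "weak-star-dense" for the \emph{subspace} one should argue via the transfinite derived sets: the derived set $\mathcal{P}^{(1)}$ of $\mathcal{P} = \{p(\varphi): p \text{ polynomial}\}$ is the same whether computed in $\tau_\star$ or (using Lemma \ref{trans} and Remark \ref{trans2}) in $\tau_{wot}$ or $\tau_{\sigma*}$, because all three have the same convergent sequences on bounded sets; iterating, $\mathcal{P}^{(\alpha)}$ is the same for all $\alpha$, hence the full closures coincide.

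\textbf{The substantive direction.} Suppose $\varphi$ is a weak star generator. Then $\varphi$ is univalent — if $\varphi(a)=\varphi(b)$ with $a\ne b$, every polynomial $p(\varphi)$ satisfies $p(\varphi)(a)=p(\varphi)(b)$, a condition preserved under weak-star (pointwise-bounded-pointwise) limits, so the weak-star closure of the polynomials in $\varphi$ cannot be all of $H^\infty(\mathbb{D})$. So Theorem \ref{univalent}(a) applies and $\{M_\varphi\}' = \{M_h : h \in H^\infty(\mathbb{D})\}$, identified with $H^\infty(\mathbb{D})$. We must show every such $M_h$ lies in $\overline{\mathrm{Alg}(M_\varphi)}^{WOT}$, i.e.\ every $h \in H^\infty(\mathbb{D})$ lies in the $\tau_{wot}$-closure of $\mathcal{P} = \{p(\varphi)\}$. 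By hypothesis $h$ lies in the $\tau_\star$-closure of $\mathcal{P}$; using Banach--Mazurkiewicz, $h \in \mathcal{P}^{(\beta)}$ for some ordinal $\beta$, where the derived sets are taken in $\tau_\star$. The key point is that the $\tau_\star$-derived set and the $\tau_{wot}$-derived set of \emph{any} linear subspace of $H^\infty(\mathbb{D})$ coincide: this is exactly Remark \ref{trans2} (a bounded sequence $\psi_n$ converges weak-star to $g$ iff $M_{\psi_n} \to M_g$ in WOT) combined with Lemma \ref{topologies}, and it uses that the closure of $\mathcal{Q}\cap nB$ in either topology, for a linear subspace $\mathcal{Q}$, is determined by its convergent sequences (metrizability of bounded sets of $H^\infty$ in the weak-star topology, which follows from separability of the predual). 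Therefore by transfinite induction $\mathcal{P}^{(\alpha)}_{\tau_\star} = \mathcal{P}^{(\alpha)}_{\tau_{wot}}$ for every $\alpha$, so $h \in \mathcal{P}^{(\beta)}_{\tau_{wot}} \subseteq \overline{\mathcal{P}}^{\tau_{wot}}$. Translating back through the identification, $M_h \in \overline{\mathrm{Alg}(M_\varphi)}^{WOT}$. Hence $\overline{\mathrm{Alg}(M_\varphi)}^{WOT} = \{M_\varphi\}'$, i.e.\ $M_\varphi$ has minimal commutant.

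\textbf{Main obstacle.} The delicate point is rigorously justifying that the transfinite derived-set construction yields the \emph{same} transfinite hierarchy in $\tau_\star$ and in $\tau_{wot}$ (equivalently $\tau_{\sigma*}$). Lemma \ref{trans} and Remark \ref{trans2} only compare the topologies on sequences and only for the specific subspace generated by $\varphi$, whereas the induction on ordinals forces us to take derived sets of the already-derived sets $\mathcal{P}^{(\alpha)}$, which need not be of that special form. The fix is to observe that each derived set $\mathcal{P}^{(\alpha)}$ is again a linear subspace of $H^\infty(\mathbb{D})$ (derived sets of linear subspaces are linear subspaces), so the statement we really need is the general one: for \emph{every} linear subspace $\mathcal{Q} \subseteq H^\infty(\mathbb{D})$, the weak-star-convergent bounded sequences in $\mathcal{Q}$ coincide with the WOT-convergent sequences of the corresponding multiplication operators — and this general statement is exactly what the (sequence-level) arguments of Lemma \ref{trans} give, since nothing there used that the functions were polynomials in $\varphi$ beyond their membership in $H^\infty(\mathbb{D})$. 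With that observation in hand the two hierarchies agree step by step, and the theorem follows.
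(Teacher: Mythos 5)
Your proposal is correct and follows essentially the same route as the paper: reduce to the univalent case, identify $\{M_\varphi\}'$ with $H^\infty(\mathbb{D})$ via Shields--Wallen, and then show the weak-star and WOT closures of the polynomials in $\varphi$ coincide by running the Banach--Mazurkiewicz transfinite derived-set construction through Lemma \ref{trans}, Remark \ref{trans2}, and Lemma \ref{topologies}. The only (harmless) deviations are that you prove ``weak-star generator implies univalent'' directly instead of citing Sarason, and that you explicitly flag and resolve the point that the derived-set comparison must apply to arbitrary linear subspaces of $H^\infty(\mathbb{D})$ --- precisely the observation the paper records in Remark \ref{refesub}.
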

\begin{proof}
Firstly, let us observe that if $M_\varphi$ has a minimal commutant then $\varphi$ is univalent (see Proposition \ref{prop1}). On the other hand, if $\varphi$ is a weak star generator of $H^\infty(\mathbb{D})$, then also $\varphi$ must be univalent (see \cite[Proposition 3]{sarasondos}). Thus, we can assume without loss that $\varphi$ is univalent.

Let us denote by $\mathcal{P}$ the linear subspace of the polynomials on $\varphi$.
We claim that the weak star closure of $\mathcal{P}$: $\overline{\mathcal{P}}^{*}$ and the WOT closure of $\mathcal{P}$:  $\overline{\mathcal{P}}^{WOT}$ are the same set.

Indeed, let us denote by $\mathcal{P}_{*}^{(\alpha)},\mathcal{P}_{\sigma*}^{(\alpha)},$ and $\mathcal{P}_{wot}^{(\alpha)}$ the derived set of order $\alpha$ in the weak star, $\sigma$ weak star and WOT topologies, respectively. By applying Lemma \ref{trans} and Remark \ref{trans2}, we get that 
for any ordinal $\alpha$ we obtain $$\mathcal{P}_{*}^{(\alpha)}=\mathcal{P}_{\sigma*}^{(\alpha)}=\mathcal{P}_{wot}^{(\alpha)}.$$

Since the predual of $H^\infty$ (a quotient of $L^1(\mathbb{D})$) and the predual of $\mathcal{L}(H^2(\mathbb{D}))$ (the trace class) are separable, there exists an ordinal $\beta$ such that
$$
\overline{\mathcal{P}}^{\sigma*}=\mathcal{P}_{\sigma*}^{(\beta)}=\mathcal{P}_{*}^{(\beta)}=\overline{\mathcal{P}}^{*}.
$$
Finally, by applying Lemma \ref{topologies} we get $\overline{\mathcal{P}}^{\sigma*}=\overline{\mathcal{P}}^{wot}$.
Now, the result follows as an observation. If $\varphi$ is univalent, then by Shields-Wallen the commutant of $M_\varphi$ is $H^\infty(\mathbb{D})$. Thus, the WOT closure of the polynomials on $\varphi$ is $H^\infty(\mathbb{D})$ if and only if the weak star closure of polynomials on $\varphi$ is $H^\infty(\mathbb{D})$, that is, $\varphi$ is a weak star generator of $H^\infty(\mathbb{D})$. \end{proof}

\begin{remark}
\label{refesub}
For future reference, from the proof of the above result, we can deduce that if $\mathcal{M}\subset H^\infty(\mathbb{D})$ is a linear subspace, then $\overline{\mathcal{M}}^{*}=\overline{\mathcal{M}}^{WOT}$.
\end{remark}

The next result, with the help of Theorem \ref{main1}, characterizes those univalent symbols $\varphi$ for which the analytic Toeplitz operator $M_\varphi$ has the double commutant property.

\begin{corollary}
\label{corolario}
Assume that $\varphi\in H^\infty(\mathbb{D})$ is univalent. Then $M_\varphi$ has the double commutant property if and only if the polynomials on $\varphi$ are weak star dense in $H^\infty(\mathbb{D})$.
\end{corollary}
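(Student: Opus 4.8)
The plan is to reduce the double commutant property to the minimal commutant property and then quote Theorem \ref{main1}. The crucial point is that for a univalent symbol the commutant $\{M_\varphi\}'$ is an abelian algebra, and an abelian algebra is always contained in its own commutant; this collapses $\{M_\varphi\}'$ and $\{M_\varphi\}''$ onto each other, so the two ``commutant'' properties become the same property.

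Concretely, I would first invoke the Shields-Wallen theorem (Theorem \ref{univalent}(a)): since $\varphi$ is univalent, $\{M_\varphi\}'=\{M_h:h\in H^\infty(\mathbb{D})\}$. Because $M_hM_g=M_{hg}=M_gM_h$ for all $h,g\in H^\infty(\mathbb{D})$, this commutant is commutative. Hence every element of $\{M_\varphi\}'$ commutes with every element of $\{M_\varphi\}'$, which gives $\{M_\varphi\}'\subseteq\{M_\varphi\}''$; combined with the trivial inclusion $\{M_\varphi\}''\subseteq\{M_\varphi\}'$ we obtain $\{M_\varphi\}'=\{M_\varphi\}''$. With this identity, the double commutant property $\overline{\textrm{Alg}(M_\varphi)}^{WOT}=\{M_\varphi\}''$ is literally the same condition as the minimal commutant property $\overline{\textrm{Alg}(M_\varphi)}^{WOT}=\{M_\varphi\}'$. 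Theorem \ref{main1} then states that this holds if and only if $\varphi$ is a weak star generator of $H^\infty(\mathbb{D})$, which by definition means the polynomials on $\varphi$ are weak star dense in $H^\infty(\mathbb{D})$, finishing the proof in both directions.

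I do not expect a real obstacle: the whole argument is a short bookkeeping step once Theorem \ref{univalent} and Theorem \ref{main1} are available. The only thing worth emphasizing is that univalence is used essentially — it is exactly what makes $\{M_\varphi\}'$ abelian via Shields-Wallen — so this corollary does not extend verbatim to non-univalent symbols, where $\{M_\varphi\}'$ may fail to be commutative and the minimal and double commutant properties can genuinely differ.
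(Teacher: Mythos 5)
Your proposal is correct and follows essentially the same route as the paper: the paper's converse also rests on Shields--Wallen giving $\{M_\varphi\}'=\{M_z\}'=\{M_h:h\in H^\infty(\mathbb{D})\}$, whence $\{M_\varphi\}''=\{M_\varphi\}'$, so the double commutant property collapses to the minimal commutant property and Theorem \ref{main1} finishes. Your explicit remark that the collapse comes from the commutant being abelian is just a cleaner way of stating what the paper does implicitly.
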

\begin{proof} Indeed, clearly if the polynomials on $\varphi$ are weak star dense in $H^\infty(\mathbb{D})$ then $M_\varphi$ has a minimal commutant. Therefore $M_\varphi$ has trivially the double commutant property.

Conversely, if $\varphi$ is univalent, then $\{M_\varphi\}'=\{M_z\}'$, therefore $\{M_\varphi\}''=\{M_z\}''=\{M_h\,:\, h\in H^\infty(\mathbb{D})\}$. Therefore, since $M_\varphi$ has the double commutant property then
$$
\overline{\textrm{Alg}(M_\varphi)}^{WOT}=\{M_\varphi\}''=\{M_z\}'=\{M_\varphi\}'.
$$
Thus $M_\varphi$ has a minimal commutant, therefore $\varphi$ is a weak star generator of $H^\infty(\mathbb{D})$ as we desired.
\end{proof}


We note that the minimal commutant property for an analytic Toeplitz operator $M_\varphi$ depends on the shape of $G=\varphi(\mathbb{D})$. For instance, using several classical results on approximation we can find several examples of univalent maps $\varphi$ which are generators of $H^\infty(\mathbb{D})$. For example, if $\varphi(z)=z$ then the Féjèr polynomials are weak star dense on $H^\infty(\mathbb{D})$. If $\varphi$ maps $\mathbb{D}$ univalently onto $G$ and $G$ is a simply connected domain whose boundary is a Jordan curve, then Walsh's Theorem asserts that the polynomials on $\varphi$ are weak star dense on $H^\infty(\mathbb{D})$. 

As D. Sarason \cite{sarason} pointed out, the domains for which $\varphi$ is a weak star generator of $H^\infty$ or order 1, were characterized by Farrel in \cite{farrel1,farrel2}. That is, $\varphi$ is a weak star generator of $H^\infty(\mathbb{D})$ if and only if $\varphi(\mathbb{D})$ is a Caratheodory domain. In \cite{sarason} Sarason provided examples of maps $\varphi$ which are weak star generators of $H^\infty(\mathbb{D})$ of order (finite) different from $1$.

On the other hand, it is easy to find maps that are not generators of $H^\infty(\mathbb{D})$. By \cite[Proposition 1]{sarasondos}, if $\varphi$ is a generator of $H^\infty(\mathbb{D})$, then $\varphi$ must be univalent almost everywhere in $\partial \mathbb{D}$. Thus, if $\varphi$ maps $\mathbb{D}$ univalently onto the slit disk $\mathbb{D}\setminus [0,1)$, then $M_\varphi$ do not have a minimal commutant, and also by applying Corollary \ref{corolario} $M_\varphi$ does not have the double commutant property.

Moreover, it is possible to find maps that are univalent in the unit circle except two points which do not induce analytic Toeplitz operators with a minimal commutant. For example, let us consider the region formed by the disk with center zero and radius one, minus the closure of the disk with center $1/2$ and radius $1/2$. In this special moom-shaped domain, it is not possible to approximate by polynomials (see \cite{gaier} p. 22).  If we consider $\varphi$ an analytic function that maps $\mathbb{D}$ to $G$, then there are bounded analytic functions in $G$ that cannot be approximated by polynomials; therefore, the operator $M_\varphi$ does not have a minimal commutant.

\begin{figure}[ht!]
\centering
\includegraphics[scale=1]{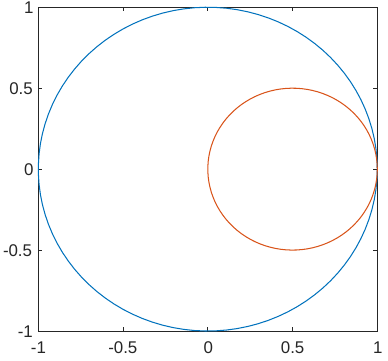}
\caption{The lunar eclipse}
\end{figure}

Given $\varphi\in 
H^\infty(\mathbb{D})$, since the
commutant $\{M_{\varphi}\}'=
\{M_{\varphi/R}\}'$, we can suppose without loss of generality that
$\|\varphi\|_\infty<1$.

From the results of Deddens
(\cite{deddens1,deddens2})
there exists an important connection between analytic Toeplitz operators
and composition operators.

The results obtaining here have
similar flavor to the results relating
to the cyclicity and density
of the range of a composition operator.
For example, the hypothesis of univalence on the
map is necessary (even the hypothesis of univalence
almost everywhere on the unit circle
$\partial\mathbb{D}$). However, there
is no equivalence between the cyclicity
of $C_\varphi$ and the minimal
commutant property of the operator
$M_{\varphi}$. Indeed, as
pointed out in
\cite{memoirsBourdonShapiro} the
composition operator $C_\varphi$
induced by $z/(2-z)$ is not cyclic,
however $M_\varphi$ has a minimal
commutant.

The range of $C_\varphi$ is dense means
exactly that the polynomials on
$\varphi$ are dense in
$H^2(\mathbb{D})$. If $\varphi$ is a weak star generator of $H^\infty(\mathbb{D})$, then the range of $C_\varphi$ is dense (see also \cite{roan}). However, the converse is not true, there are maps $\varphi$ such that the polynomials on $\varphi$ are dense in $H^2(\mathbb{D})$ but $\varphi$ is not a weak star generator of $H^\infty(\mathbb{D})$ (see \cite{akeroyd}).


\section{Double commutant property for analytic Toeplitz operators induced by maps that single cover an open neighborhood}
\label{secciontres}
Maps very close to univalent ones are those $\varphi$ nonunivalent for which $\varphi$ single covers an open neighborhood $W$ of $\varphi(\mathbb{D})$.
We say that \textit{$\varphi$ singly covers a set $W$} whenever $\varphi$ takes some set $V \subset \mathbb{D}$ univalently onto $W$ with $\varphi^{-1}(W)=V$, that is, the points of $V$ are the only ones in $\mathbb{D}$ taken by $\varphi$ into $W$.

Let us denote by $S\subset H^\infty(\mathbb{D})$ the set of such maps. The objective of this section is to prove that if $\varphi\in S$ then $M_\varphi$ does not have the double commutant property.

Given $h\in S$, Deddens and Wong (\cite{deddenswong}) proved that $\{M_h\}'=\{M_z\}'$. They obtained such a result as a byproduct of a characterization of the commutant of an analytic Toeplitz operator $M_\varphi$ in terms of the commutants of the symbols of their inner-outer factorization. This result was later obtained by Bourdon and Shapiro (\cite{bourdonshapiro}) as a byproduct of a deep result on intertwining
analytic Toeplitz operators, which is intimately connected on a still open conjecture posed by Deddens.

In fact, such a result can be obtained using the ideas of Shields and Wallen used in Theorem \ref{univalent}.
For the sake of completeness, we include a new proof of this fact that uses only function theory.

\begin{theorem}[Deddens-Wong]
\label{deddeswong}
Assume that $\varphi$ single covers a nonempty neighborhood $W$ of $\varphi(\mathbb{D})$, then $\{M_\varphi\}'=\{M_z\}'$.
\end{theorem}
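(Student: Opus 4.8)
The plan is to imitate the Shields--Wallen reproducing-kernel argument of Theorem~\ref{univalent}, but to carry it out only over the set $V=\varphi^{-1}(W)$ on which $\varphi$ is injective. Since $\{M_z\}'=\{M_h:h\in H^\infty(\mathbb{D})\}$ and the inclusion $\{M_h:h\in H^\infty(\mathbb{D})\}\subseteq\{M_\varphi\}'$ is immediate (as $M_hM_\varphi=M_{h\varphi}=M_\varphi M_h$), it is enough to show that every $T\in\{M_\varphi\}'$ has the form $M_h$ with $h\in H^\infty(\mathbb{D})$. As before, $T\in\{M_\varphi\}'$ gives $M_\varphi^\star T^\star=T^\star M_\varphi^\star$, and from $M_\varphi^\star k_a=\overline{\varphi(a)}\,k_a$ one gets $M_\varphi^\star(T^\star k_a)=\overline{\varphi(a)}\,(T^\star k_a)$, i.e.\ $T^\star k_a\in\ker(M_\varphi^\star-\overline{\varphi(a)}I)$ for every $a\in\mathbb{D}$. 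I would first record that $\{k_a:a\in V\}$ is still a total set in $H^2(\mathbb{D})$: a function orthogonal to all of them vanishes on the nonempty open set $V$, hence vanishes identically. Everything then reduces to one claim: \emph{for each $a\in V$ the eigenspace $\ker(M_\varphi^\star-\overline{\varphi(a)}I)$ is one dimensional, equal to $\mathbb{C}\,k_a$.}

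Establishing this claim is the main obstacle, and it is the only place the single-covering hypothesis is really used. Fix $a\in V$ and set $\psi=\varphi-\varphi(a)$. Since $W$ is open, $V$ is open, and $\varphi|_V\colon V\to W$ is a holomorphic bijection between open planar sets, hence a biholomorphism; in particular $\varphi'(a)\neq0$, so $\psi$ has a simple zero at $a$, and by single covering $a$ is the \emph{only} zero of $\psi$ in $\mathbb{D}$. Beurling's theorem gives that the closure of $\mathrm{range}(M_\psi)=\psi H^2(\mathbb{D})$ is $\Theta H^2(\mathbb{D})$, where $\Theta$ is the inner factor of $\psi$; hence $\ker(M_\varphi^\star-\overline{\varphi(a)}I)=H^2(\mathbb{D})\ominus\Theta H^2(\mathbb{D})$, and the previous sentence shows that the Blaschke part of $\Theta$ is the single factor $b_a(z)=\tfrac{z-a}{1-\overline{a}z}$. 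It remains to rule out a singular inner factor, and here is how I would do it: suppose $\psi=b_a\,S_\mu\,O$ with $S_\mu$ a nontrivial singular inner function (positive singular measure $\mu\neq0$) and $O$ outer. The boundary values of $O$ and of $\psi$ have the same modulus, so $|O|\le\|\psi\|_\infty$ a.e.\ on $\partial\mathbb{D}$, and since $O$ is outer, $\|O\|_\infty\le\|\psi\|_\infty$. Because $\mu\neq0$, there is $\zeta_0\in\partial\mathbb{D}$ at which the Poisson integral $P[\mu]$ has radial limit $+\infty$; putting $z_n=(1-\tfrac1n)\zeta_0\to\zeta_0$ we get $|S_\mu(z_n)|=e^{-P[\mu](z_n)}\to0$, whence $|\psi(z_n)|\le|S_\mu(z_n)|\,\|\psi\|_\infty\to0$, i.e.\ $\varphi(z_n)\to\varphi(a)$. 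For $n$ large this forces $\varphi(z_n)\in W$, so $z_n\in\varphi^{-1}(W)=V$, and by continuity of $(\varphi|_V)^{-1}\colon W\to V$ we obtain $z_n=(\varphi|_V)^{-1}(\varphi(z_n))\to(\varphi|_V)^{-1}(\varphi(a))=a\in\mathbb{D}$, contradicting $z_n\to\partial\mathbb{D}$. Thus $\mu=0$ and the claim holds.

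Granting the claim, $T^\star k_a=\overline{h(a)}\,k_a$ for $a\in V$, defining a scalar function $h$ on $V$, and for every $f\in H^2(\mathbb{D})$ and $a\in V$ one computes
$$
(Tf)(a)=\langle Tf,k_a\rangle=\langle f,T^\star k_a\rangle=h(a)\,\langle f,k_a\rangle=h(a)\,f(a),
$$
so $Tf$ and $hf$ agree on $V$. Taking $f\equiv1$ shows $h=T1\in H^2(\mathbb{D})$, so $h$ extends holomorphically to $\mathbb{D}$; then for arbitrary $f\in H^2(\mathbb{D})$ the holomorphic functions $Tf$ and $hf$ coincide on the nonempty open set $V$, hence on all of $\mathbb{D}$ by the identity theorem. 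Therefore $hf\in H^2(\mathbb{D})$ with $\|hf\|_2\le\|T\|\,\|f\|_2$ for all $f$; taking $f=k_\zeta$ and using the reproducing property yields $|h(\zeta)|\le\|T\|$ for every $\zeta\in\mathbb{D}$, so $h\in H^\infty(\mathbb{D})$ and $T=M_h$. This gives $\{M_\varphi\}'\subseteq\{M_h:h\in H^\infty(\mathbb{D})\}=\{M_z\}'$, and with the trivial reverse inclusion we conclude $\{M_\varphi\}'=\{M_z\}'$. To summarize the difficulty: only the one-dimensionality of the eigenspaces $\ker(M_\varphi^\star-\overline{\varphi(a)}I)$, $a\in V$ --- equivalently, the absence of a singular inner factor in $\varphi-\varphi(a)$ --- requires work; the remainder is the Shields--Wallen computation together with the identity theorem.
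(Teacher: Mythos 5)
Your proof is correct, but it follows a genuinely different route from the one in the paper. You run the Shields--Wallen adjoint argument of Theorem \ref{univalent} over the set $V=\varphi^{-1}(W)$, reducing everything to the one-dimensionality of $\ker(M_\varphi^\star-\overline{\varphi(a)}I)$ for $a\in V$, which you obtain from Beurling's theorem by showing that the inner factor of $\varphi-\varphi(a)$ is exactly the single Blaschke factor $b_a$ (no extra zeros by single covering, no singular factor by your boundary-sequence argument). The paper explicitly notes that the Shields--Wallen ideas would work here but deliberately gives a proof ``that uses only function theory'': it writes $\varphi(z)-\varphi(a)=(z-a)g(z)$, uses the single-covering hypothesis to show $|g|$ is bounded below on $\mathbb{D}$ (hence $1/g\in H^\infty(\mathbb{D})$), factors $f-f(a)=(\varphi-\varphi(a))F$ with $F\in H^2(\mathbb{D})$, and applies $XM_\varphi=M_\varphi X$ directly to get $(Xf)(a)=(X1)(a)f(a)$ for $a\in V$; the identity-theorem extension from $V$ to all of $\mathbb{D}$ is then common to both proofs. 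The crucial use of the hypothesis is the same contradiction in both arguments --- a sequence tending to $\partial\mathbb{D}$ along which $\varphi\to\varphi(a)$ would violate $\varphi^{-1}(\varphi(B(a,r)))=B(a,r)$ --- but it is packaged differently: the paper's version yields the slightly stronger fact that $g$ is invertible in $H^\infty(\mathbb{D})$, so that $(\varphi-\varphi(a))H^2(\mathbb{D})=(z-a)H^2(\mathbb{D})$ exactly, whereas yours only identifies the closure of that range, which is all Beurling's theorem requires; in exchange, your version makes the structural reason transparent (the eigenspace is the model space of the inner factor of $\varphi-\varphi(a)$) at the cost of invoking inner--outer factorization and the fact that $P[\mu]$ has radial limit $+\infty$ at some boundary point for a nonzero positive singular measure $\mu$.
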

\begin{proof} To see the ideas clearly, let's divide the proof into several steps.

{\it Steep 1. 
Let $\varphi \in H^\infty(\mathbb{D})$, continuous in $\overline{\mathbb{D}}$. If for some 
$a\in \mathbb{D}$, $\varphi(z)-\varphi(a)$ has only a simple zero in $\mathbb{D}$ and $X$ commutes with $M_\varphi$ then $(Xf)(a)=(X1)(a) f(a)$.}

Note that $\varphi(z)-\varphi(a)=(z-a)g(z)$, with $g$ continuous and non-vanishing
in $\overline{\mathbb{D}}$. By the maximum principle $1/g \in H^\infty(\mathbb{D})$. 

On the other hand, if $h\in H^2(\mathbb{D})$, and $h(a)=0$, then $h(z)=(z-a)u(z)$ for
some $u\in H^2(\mathbb{D})$. Therefore,
$$
h(z)=\varphi(z)-\varphi(a)=u(z)\frac{u(z)}{g(z)}.
$$

Next, let $f\in H^2(\mathbb{D})$, then $f(z)-f(a)$ vanishes at $z=a$. Hence, it can be written as $f(z)-f(a)=(\varphi(z)-\varphi(a))F(z)$ for some $F\in H^2(\mathbb{D})$.

Since $X$ commutes with $M_\varphi$, we get
$$X(f-f(a))(z) = X(M_\varphi F-\varphi(a)F)(z)=\varphi(z)XF(z)-\varphi(a)XF(z).$$
Therefore, $0=X(f-f(a))(a)=Xf(a)- f(a)X1(a)$, which proves the first steep.

{\it Steep 2.  Let $\varphi\in H^\infty(\mathbb{D})$, such that $\varphi$ single covers a non-empty open subset $W\subset \varphi(\mathbb{D})$. If $X$ commutes with $M_\varphi$ then $Xf(a)=(X1)(a)f(a)$.}
The proof is the same as in Steep 1 as long we can write $\varphi(z)-\varphi(a)=(z-a)g(z)$; $z\in \mathbb{D}$ with $g$ and $1/g\in H^\infty(\mathbb{D})$.

Let $V=\varphi^{-1}(W)$, thus, $\varphi$ is univalent in $V$. Therefore, if $a\in V$, $\varphi'(a)\neq 0$ and we can write:
$$
\varphi(z)-\varphi(a)=(z-a)g(z),\,\,z\in \mathbb{D}.
$$
Since $\varphi\in H^\infty(\mathbb{D})$, $g$ is bounded in $\mathbb{D}$. Moreover, by our hypothesis $\varphi(z)-\varphi(a)$ only vanish in $\mathbb{D}$ at the point $a$. Thus $g(z)\neq 0$ for all $z\in \mathbb{D}$. We claim that for all $z\in \mathbb{D}$, $|g(z)|>c$ for some $c>0$.

Let us consider a ball $B(a,r)$ centered at $a$ of radius $r>0$ with $\overline{B(a,r)}\subset V$. Then $\varphi^{-1}(\varphi(B(a,r))=B(a,r)$. Assume that $g$ is not bounded below in $\mathbb{D}$. Then, there is a sequence $(z_n)\subset \mathbb{D}$ with $g(z_n)\to 0$. Since $g(z)\neq 0$ for all $z\in \mathbb{D}$, $(z_n)$ has to approach $\partial \mathbb{D}$. Thus, by choosing $n$ large enough, $z_n\notin B(a,r)$ and $\varphi(z_n)\in \phi(B(a,r))$ which is a contradiction.

{\it Steep 3.} Finally, let us show that $\{M_\varphi\}'=\{M_z\}'$. Since $M_z$ has a minimal commutant then $\{M_z\}'\subset \{M_\varphi\}'$. Set $X\in \{M_\varphi\}'$. Since the function $a\in \mathbb{D}\to k_a(z)$ is analytic we get that for each $f\in H^2(\mathbb{D})$ the function  
$$
\langle Xf,k_a\rangle=h_f(a)
$$
is analytic on $\mathbb{D}$.
Moreover, since $X$ is bounded by the 
Cauchy-Schwarz  inequality we get that $h_f$ is bounded on $\mathbb{D}$. By {\it steep 2} we get  that for any $a\in W$
$
h_f(a)=\langle Xf,a\rangle=(Xf)(a)= (X1)(a) f(a).
$
In particular for $f=1$, we obtain that $h_1(z)=(X1)(z)\in H^\infty(\mathbb{D})$.
Hence for all $a\in \mathbb{D}$, $Xf(a)=(X1)(a)f(a)$, that is $X$ is an analytic Toeplitz operator with symbol $(X1)(z)$ and therefore $X\in \{M_z\}'$ as we desired.
\end{proof}

Let us show that the elements of class $S$ do not have the double commutant property.

\begin{proposition}
\label{singlecover}
Assume that $\varphi\in S$, that is, $\varphi$ is not univalent and $\varphi$ single covers a non-empty neighborhood $W$ of $\varphi(\mathbb{D})$. Then $M_\varphi$ has not the double commutant property.
\end{proposition}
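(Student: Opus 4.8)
The plan is to reduce the double commutant property to the minimal commutant property and then to invoke Proposition \ref{prop1}, exploiting the fact that membership in the class $S$ forces $\{M_\varphi\}'$ to be as large as possible. First I would apply the Deddens--Wong Theorem \ref{deddeswong}: since $\varphi$ singly covers a neighborhood of $\varphi(\mathbb{D})$, we have $\{M_\varphi\}'=\{M_z\}'=\{M_h\,:\,h\in H^\infty(\mathbb{D})\}$.

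Next I would compute the double commutant. The algebra $\{M_z\}'$ is commutative, because $M_hM_g=M_{hg}=M_gM_h$; therefore every element of $\{M_z\}'$ commutes with all of $\{M_z\}'$, which gives $\{M_z\}'\subset\{M_z\}''$. Combined with the trivial inclusion $\{M_z\}''\subset\{M_z\}'$, this yields $\{M_\varphi\}''=\{M_z\}''=\{M_z\}'=\{M_\varphi\}'$.

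With these two identities in hand, the argument closes exactly as in Corollary \ref{corolario}, but run contrapositively. Suppose $M_\varphi$ had the double commutant property, i.e. $\overline{\textrm{Alg}(M_\varphi)}^{WOT}=\{M_\varphi\}''$. Then $\overline{\textrm{Alg}(M_\varphi)}^{WOT}=\{M_\varphi\}'$, which is precisely the minimal commutant property for $M_\varphi$. By Proposition \ref{prop1} this would force $\varphi$ to be univalent, contradicting the hypothesis $\varphi\in S$ that $\varphi$ is not univalent. Hence $M_\varphi$ fails the double commutant property.

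There is no real obstacle here: the only substantive input is the identification $\{M_\varphi\}'=\{M_z\}'$, which is Theorem \ref{deddeswong}, and everything else is bookkeeping about commutants together with the commutativity of the analytic Toeplitz algebra. If one prefers to bypass Proposition \ref{prop1}, the same conclusion follows by observing that the minimal commutant property would make $\varphi$ a weak star generator of $H^\infty(\mathbb{D})$ by Theorem \ref{main1}, and weak star generators are univalent by \cite[Proposition 3]{sarasondos}.
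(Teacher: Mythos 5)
Your proof is correct, and it takes a genuinely different (and shorter) route than the paper's. The paper argues by exhibiting an explicit separating functional: since $\varphi$ is not univalent, the polynomials in $\varphi$ are not dense in $H^2(\mathbb{D})$, so one picks $f_0\neq 0$ orthogonal to them, defines the WOT-continuous functional $\gimel(X)=\langle X1,f_0\rangle$, and finds $h\in H^\infty(\mathbb{D})$ with $\langle h,f_0\rangle\neq 0$; Deddens--Wong puts $M_h$ in $\{M_\varphi\}''$, while $\gimel$ annihilates $\overline{\textrm{Alg}(M_\varphi)}^{WOT}$, so $M_h$ witnesses the failure of the double commutant property. You instead observe that Deddens--Wong plus the commutativity of $\{M_z\}'=\{M_h:h\in H^\infty(\mathbb{D})\}$ forces $\{M_\varphi\}''=\{M_\varphi\}'$, so for $\varphi\in S$ the double commutant property collapses to the minimal commutant property, which fails by Proposition \ref{prop1}. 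Both arguments have the same single substantive input (Theorem \ref{deddeswong}); your reduction is cleaner and makes explicit that for this class the two properties are equivalent (and the identity $\{M_z\}''=\{M_z\}'$ you use is exactly what the paper invokes in Corollary \ref{corolario}), whereas the paper's functional construction produces a concrete element of $\{M_\varphi\}''\setminus\overline{\textrm{Alg}(M_\varphi)}^{WOT}$ and serves as the template that is reused later in Proposition \ref{geometric} and Theorem \ref{entera}.
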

\begin{proof}
Since $\varphi$ is not univalent, then the polynomials on $\varphi$ are not dense in $H^2(\mathbb{D})$. Thus, there exists $f_0\in H^2(\mathbb{D})\setminus \{0\}$ such that
$$
\langle p(M_\varphi)1,f_0\rangle=0
$$
for each polynomial $p$.
Consider the linear functional $\gimel: \mathcal{L}(H^2(\mathbb{D}))\to \mathbb{C}$ defined by:
$$
\gimel(X)=\langle X1,f_0\rangle.
$$
The density of $H^\infty(\mathbb{D})$ in $H^2(\mathbb{D})$ and the fact that $f_0\neq 0$, implies the existence of $h\in H^\infty(\mathbb{D})$ such that $\langle h,f_0\rangle\neq 0$. Since $\varphi\in S$,  Deddens-Wong's Theorem implies that $M_h$ commutes with each element in $\{M_\varphi\}'=\{M_z\}'$, that is, $M_h\in\{M_\varphi\}''$.

By continuity we get that $\gimel$ is a linear functional such that $\gimel(Y)=0$ for all $Y\in \overline{\textrm{Alg}(M_\varphi))}^{WOT}$
and $\gimel(M_h)=\langle h,f_0\rangle\neq 0$. Thus, $M_\varphi$ do not have the double commute property, as we wanted to prove.
\end{proof}

Let us point out that T. Turner in his 1971 Michigan dissertation (see \cite[Appendix D p.91]{turner}) posed the following question: Assume that $T$ has the double commutant property. Does $T^n$ have the double commutant property for all $n$? This question was solved by Deddens and Wogen in \cite[Examples 1,2 p. 362]{deddenswogen}.  Proposition \ref{singlecover} helps us to find some more natural counterexamples.  

\begin{example}
If we consider the cardioid map $\varphi(z)=(z+1/2)^2$,
in Figure \ref{fig1} the regions inside the inner loop are double covered by $\varphi$. On the other hand, the cardioid region, outside of the small loop, is covered in a single way by $\varphi$. Therefore, $M_\varphi$  does not have the double commutant property. 

Thus, if we consider the translation $\psi(z)=z+1/2$,  by Walsh's Theorem we see that $M_\psi$ has the minimal commutant property, therefore $M_\psi$ trivially has the double commutant property, but
$M_\psi^2=M_\varphi$ does not.

\end{example}

\begin{figure}[h!]
\centering
\includegraphics[scale=1]{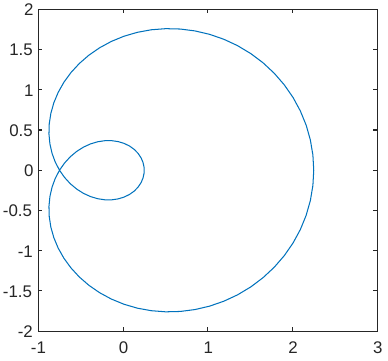}
\caption{The map $\varphi(z)=(z+1/2)^2$}
\label{fig1}
\end{figure}

\section{The double commutant of $M_{z^n}$.}
\label{seccioncuatro}

 The commutant of $M_{z^n}$, $n\geq 2$ was studied by Z. \v{C}u\v{c}kovi\'{c} in \cite{cucko}. The commutant of $M_{z^n}$, $n\geq 2$ is very big. \v{C}u\v{c}kovi\'{c} was able to describe the elements in the algebra of all Toeplitz operators that commutes with $M_{z^n}$. The commutant of $M_{z^n}$, $n\geq 2$ is so big, that the double commutant of $M_{z^n}$, $n\geq 2$ should be minimal.

In fact, this result is a special case of a general result of Turner \cite{turner2} which asserts that if $X$ is a nonunitary isometry, then $X$ has the double commutant property. 

However, we wish to show an alternative proof of this fact for the special case $M_{z^n}$, $n\geq 2$. 
\begin{proposition}
\label{doss}
The operator $M_{z^n}$, $n\geq 2$ has the double commutant property.
\end{proposition}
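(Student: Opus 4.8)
The plan is to compute both sides of the identity $\overline{\mathrm{Alg}(M_{z^n})}^{WOT}=\{M_{z^n}\}''$ explicitly and check that they coincide. The natural first move is the standard realization of $M_{z^n}$ as a shift of multiplicity $n$: writing $f(z)=\sum_{j=0}^{n-1}z^j f_j(z^n)$ with each $f_j\in H^2(\mathbb{D})$ defines a unitary $U\colon H^2(\mathbb{D})\to H^2(\mathbb{D})\otimes\mathbb{C}^n$, $Uf=(f_0,\dots,f_{n-1})$, under which $U M_{z^n}U^\star=M_z\otimes I_n$. A one-line matrix computation then shows that an operator matrix $(T_{ij})_{i,j=0}^{n-1}$ commutes with $M_z\otimes I_n$ if and only if every entry $T_{ij}$ commutes with $M_z$, i.e.\ $T_{ij}=M_{h_{ij}}$ for some $h_{ij}\in H^\infty(\mathbb{D})$ by Theorem \ref{univalent}(a). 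Hence $\{M_{z^n}\}'$ is, after conjugation by $U$, the full algebra of $n\times n$ analytic Toeplitz operator matrices.

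Next I would compute the double commutant. Inside this matrix algebra sit all the ``matrix units'' $E_{kl}(h)$ having $M_h$ in position $(k,l)$ and $0$ elsewhere (for every $h\in H^\infty(\mathbb{D})$ and every $k,l$), together with the identity $I=\sum_k E_{kk}(1)$. Imposing that an operator matrix $(T_{ij})$ commute with every $E_{kl}(1)$ forces $T$ to be diagonal with all diagonal entries equal, say $T_{ij}=\delta_{ij}R$; imposing in addition that it commute with $E_{00}(z)$ forces $R$ to commute with $M_z$, so $R=M_h$ for some $h\in H^\infty(\mathbb{D})$, and conversely $M_h\otimes I_n$ does commute with every $E_{kl}(g)$. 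Therefore $\{M_z\otimes I_n\}''=\{M_h\otimes I_n:h\in H^\infty(\mathbb{D})\}$, and since a direct check gives $U^\star(M_h\otimes I_n)U=M_{h(z^n)}$, we conclude
\[
\{M_{z^n}\}''=\{\,M_{h(z^n)}\ :\ h\in H^\infty(\mathbb{D})\,\}.
\]

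It remains to show that every $M_{h(z^n)}$ lies in $\overline{\mathrm{Alg}(M_{z^n})}^{WOT}$; the reverse inclusion is automatic. Given $h\in H^\infty(\mathbb{D})$, take its Fej\'er polynomials $p_k$, so that $p_k\to h$ uniformly on compact subsets of $\mathbb{D}$ and $\|p_k\|_\infty\le\|h\|_\infty$. Then $p_k(z^n)\to h(z^n)$ pointwise on $\mathbb{D}$ with $\|p_k(z^n)\|_\infty\le\|h\|_\infty$, so the argument in the proof of Lemma \ref{trans} (together with Remark \ref{trans2}) yields $p_k(M_{z^n})=M_{p_k(z^n)}\to M_{h(z^n)}$ in the weak operator topology. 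Since each $p_k(M_{z^n})\in\mathrm{Alg}(M_{z^n})$, this gives $\{M_{z^n}\}''\subseteq\overline{\mathrm{Alg}(M_{z^n})}^{WOT}$, so $M_{z^n}$ has the double commutant property. (Alternatively one can identify $\overline{\mathrm{Alg}(M_{z^n})}^{WOT}$ directly via Remark \ref{refesub} as $\{M_\psi:\psi\in\overline{\mathcal{M}}^{\,*}\}$, where $\mathcal{M}$ is the subspace of polynomials in $z^n$, and compute $\overline{\mathcal{M}}^{\,*}=\{h(z^n):h\in H^\infty(\mathbb{D})\}$ using that point evaluations are weak star continuous and preserve the $n$-fold symmetry $\psi(\omega z)=\psi(z)$ for $\omega^n=1$; but this is not needed for the theorem.)

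The step I expect to be the main obstacle is the double commutant computation: one must correctly recognize $\{M_{z^n}\}'$ as the full $n\times n$ matrix algebra over $\{M_z\}'$ and then run the elementary but slightly fussy commutation bookkeeping with the matrix units $E_{kl}(h)$ that collapses this algebra down to the scalar multiples $M_{h(z^n)}$. Once that is in place, the remaining work only uses Fej\'er approximation and the topological identifications already established in Section \ref{secciondos}.
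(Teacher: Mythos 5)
Your proof is correct and is essentially the paper's argument in matrix clothing: the decomposition $f=\sum_{j}z^{j}f_{j}(z^{n})$, the projection onto the $j=0$ component (your $E_{00}(1)$, the paper's operator $T$), commutation with $M_z$ to conclude $X=M_h$, and the concluding Fej\'er-polynomial WOT-approximation all coincide with what the paper does. The only real difference is that you first compute the full commutant $\{M_{z^n}\}'$ as the algebra of $n\times n$ analytic Toeplitz operator matrices --- a correct but unnecessary detour, since the paper pins down the double commutant by testing against just two well-chosen elements of the commutant ($M_z$ and the projection $T$) without ever describing $\{M_{z^n}\}'$ in full.
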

\begin{proof}

Assume that $S$ is a linear operator in the double commutant of $M_{z^n}$.
 Since $S$ commutes with $M_z$ then
$S=M_h$ for some bounded analytic function $h$.

 For any analytic function $f(z)=\sum_{n=0}^\infty a_nz^n\in H^2(\mathbb{D})$, we can decompose  
$$
f(z)=f_1(z)+zf_2(z)+\cdots + z^{n-1}f_n(z)
$$
where $f_i(z)=F_i(z^n)$ for some $F_i\in H^2(\mathbb{D})$. 

We consider the operator $Tf(z)=f_1(z)$ which is clearly bounded on $H^2(\mathbb{D})$ and 
 commutes with $M_{z^n}$.
Therefore, forcing $M_h$ to commute with $T$ we get
$M_hT1=h=h_1$, therefore $h=H_1(z^n)$ for some $H_1\in H^\infty(\mathbb{D})$.

Next, the argument follows similar to the proof of Theorem \ref{univalent} by Shields-Wallen. We consider $p_n$ the Féjèr polynomials of $H_1$ and we have that $p_n(z)\to H_1(z)$ uniformly on compact subsets of $\mathbb{D}$ and $\|p_n\|_\infty\leq \|H_1\|_\infty$. 
And this fact implies that $(p_k(M_{z^n}))$ converges to $M_h=M_{H_1(z^n)}$ in the weak operator topology.
\end{proof}
To avoid repeating arguments,
we state the following lemma, whose
proof follows by the end of the proof
of Proposition \ref{doss}.

\begin{lemma}
\label{corte}
Assume that $h\in H^\infty (\mathbb{D})$ and $B$ is an inner function. If there exists $\varphi\in H^\infty(\mathbb{D})$ such that $h(z)=\varphi(B)$, then $M_h\in \overline{\textrm{Alg}(M_{B})}^{WOT}$.
\end{lemma}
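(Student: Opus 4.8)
The plan is to imitate the final paragraph of the proof of Proposition \ref{doss}, since the content there is exactly this statement in the special case $B(z)=z^n$. First I would write $h=\varphi(B)$ and let $(p_n)$ denote the Féjèr polynomials of $\varphi$, so that $p_n\to\varphi$ uniformly on compact subsets of $\mathbb{D}$ with $\|p_n\|_\infty\le\|\varphi\|_\infty$. Then $p_n(B)$ is a polynomial in $B$, hence $p_n(M_B)=M_{p_n(B)}\in\textrm{Alg}(M_B)$, and $\|p_n(M_B)\|=\|p_n(B)\|_\infty\le\|p_n\|_\infty\le\|\varphi\|_\infty$, so the net is uniformly bounded.

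Next I would check pointwise convergence against reproducing kernels: since $|B(a)|<1$ for $a\in\mathbb{D}$ and $p_n\to\varphi$ uniformly on compact subsets, $p_n(B(a))\to\varphi(B(a))=h(a)$ for every $a\in\mathbb{D}$, i.e. $\langle p_n(M_B)f,k_a\rangle=p_n(B(a))f(a)\to h(a)f(a)=\langle M_hf,k_a\rangle$. This convergence then passes to finite linear combinations of reproducing kernels, and, using the uniform bound $\|p_n(M_B)\|\le\|\varphi\|_\infty$ together with the fact that $\{k_a:a\in\mathbb{D}\}$ is total in $H^2(\mathbb{D})$, it extends to all $g\in H^2(\mathbb{D})$ by the standard $\varepsilon/2$ argument already used in Theorem \ref{univalent} and Lemma \ref{trans}. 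Hence $p_n(M_B)\to M_h$ in the weak operator topology, which gives $M_h\in\overline{\textrm{Alg}(M_B)}^{WOT}$.

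There is essentially no obstacle here — the only point requiring a word of care is that $p_n(B)$ genuinely lies in $\textrm{Alg}(M_B)$ (true because $p_n$ is a polynomial with, possibly, a nonzero constant term and $\textrm{Alg}(M_B)$ is the \emph{unital} algebra generated by $M_B$, as fixed in the Introduction) and that $M_{p_n(B)}=p_n(M_B)$, which is immediate since $\varphi\mapsto M_\varphi$ is multiplicative on $H^\infty(\mathbb{D})$. So the argument is a routine transcription of the closing lines of Proposition \ref{doss} with $z^n$ replaced by the arbitrary inner function $B$.
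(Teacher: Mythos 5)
Your argument is correct and is essentially the paper's own proof: the paper explicitly states that Lemma \ref{corte} ``follows by the end of the proof of Proposition \ref{doss},'' i.e.\ by running the Féj\`er-polynomial approximation of $\varphi$ composed with $B$ exactly as you do, with the uniform bound $\|p_n(B)\|_\infty\le\|p_n\|_\infty\le\|\varphi\|_\infty$ and pointwise convergence against reproducing kernels yielding WOT convergence. No gaps; your remarks on $p_n(M_B)=M_{p_n(B)}$ and the unital algebra are the right points of care.
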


\begin{remark}
The commutant of $M_{z^n}$ is so big, that there are elements that are not in the Toeplitz algebra.
For example, set $a=re^{i\theta_0}\in \mathbb{D}$ and consider
$a_k=r^{1/n}e^{\frac{\theta_0+2k\pi}
{n}i}$,
$k=0,\cdots,n-1$ the $n$-roots of $a$. 
Set $\lambda_0=e^{2\pi i/n}\in 
\partial \mathbb{D}$ the ``first" root 
of the unity. Clearly
$a_k=a_0\lambda_0^k$, for $k=0,\cdots, 
n-1$. The dilation operator
defined by $L_{\lambda_0}f(z)=f(\overline{\lambda
_0} z)$, also commutes with $M_{z^n}$. If we force $M_h$ to commute with $L_{\lambda_0}^k$, $k=1,...,n$ we find that $h$ is constant on each $z^n$ fiber, $(z^n)^{-1}$, therefore there is a unique analytic function $\varphi$
such that $h(z)=\varphi(z^n)$, and from this we find again that $M_{z^n}$ has the double commutant property.
\end{remark}

For future reference, we can deduce the following corollary.
\begin{corollary}
\label{doblen}
If $X\in \{M_{z^k}\}''$, then there exists $h\in H^\infty(\mathbb{D})$ such that $X=M_{h(z^k)}$.
\end{corollary}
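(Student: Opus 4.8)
The plan is to read the conclusion straight off the argument already carried out for Proposition~\ref{doss}, run now with $n=k$ and keeping track of the symbol itself rather than only of the final WOT approximation. First I would observe that $M_z$ commutes with $M_{z^k}$, so $M_z\in\{M_{z^k}\}'$; since $X$ lies in the double commutant, $X$ commutes with $M_z$. Applying Theorem~\ref{univalent} to the univalent symbol $\varphi(z)=z$ produces $h\in H^\infty(\mathbb{D})$ with $X=M_h$.

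Next I would reintroduce the bounded operator $T$ from the proof of Proposition~\ref{doss}. Writing each $f\in H^2(\mathbb{D})$ as $f(z)=f_1(z)+zf_2(z)+\cdots+z^{k-1}f_k(z)$ with $f_i(z)=F_i(z^k)$, $F_i\in H^2(\mathbb{D})$, one sets $Tf=f_1$. This $T$ is the coordinate projection for the orthogonal decomposition of $H^2(\mathbb{D})$ into the subspaces $z^j\{\,F(z^k)\,:\,F\in H^2(\mathbb{D})\,\}$, $0\le j\le k-1$, hence bounded (indeed of norm one), and a direct check shows $TM_{z^k}=M_{z^k}T$; thus $T\in\{M_{z^k}\}'$. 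Since $X=M_h\in\{M_{z^k}\}''$, we must have $M_hT=TM_h$.

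The decisive step is to test the identity $M_hTf=TM_hf$ on the constant function $f\equiv 1$. Since $1=F_1(z^k)$ with $F_1\equiv 1$ and the remaining components vanish, $T1=1$, so $M_hT1=M_h1=h$; on the other hand $TM_h1=Th=h_1$, the first component of $h$ in the above decomposition. Hence $h=h_1$, which is precisely the assertion that $h(z)=H(z^k)$ for some $H\in H^\infty(\mathbb{D})$. Relabelling $H$ as $h$ gives $X=M_{h(z^k)}$, as claimed.

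I do not anticipate a genuine obstacle here: the entire content is already present in Proposition~\ref{doss}, and the only point worth recording carefully is the boundedness of $T$, which is immediate from its description as a coordinate projection with respect to the orthogonal decomposition $H^2(\mathbb{D})=\bigoplus_{j=0}^{k-1}z^j\{\,F(z^k)\,:\,F\in H^2(\mathbb{D})\,\}$. If one wishes to avoid citing Theorem~\ref{univalent}, the factorization $X=M_h$ can equally be obtained directly from $M_z\in\{M_{z^k}\}'$ by the reproducing-kernel argument of that theorem, but invoking Theorem~\ref{univalent} is the most economical route.
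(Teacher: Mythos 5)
Your proposal is correct and follows essentially the same route as the paper: the corollary is exactly the intermediate step in the proof of Proposition~\ref{doss} (commutation with $M_z$ gives $X=M_h$ via Theorem~\ref{univalent}, and commutation with the projection $T$ onto the $\{F(z^k)\}$-component forces $h=h_1=H(z^k)$), which is why the paper records it without a separate proof. Your added remarks on the orthogonality of the decomposition and the verification that $T\in\{M_{z^k}\}'$ are accurate and fill in details the paper leaves implicit.
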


For the sake of complementing the exposition, we will extend the idea of Proposition \ref{doss} to any inner function $g$.
For any isometry $V$ in a Hilbert space $H$, the Wold decomposition Theorem asserts that $H=H_0\oplus \sum_{n=1}^\infty \oplus V^n(H_1)$, where $H_1=H\ominus V(H)$ and $H_0=\bigcap_{n=1}^\infty V^n(H)$.
When $H=H^2(\mathbb{D})$ and $V$ is multiplication $M_g$ by an inner function $g$, then $H_0=\{0\}$ and 
$$
H^2(\mathbb{D})=\sum_{n=1}^\infty \oplus s_n(z) H^2[g]
$$
when $H^2[g]=\{ f(g)\, :\,f\in H^2(\mathbb{D})\}$ and $\{s_1(z),\cdots,s_n(z)\,\cdots\}$ is an orthonormal basis of $H^2(\mathbb{D})\ominus gH^2(\mathbb{D})$ that can be constructed by the Gram–Schmidt process. If $g(0)=0$ we can take $s_1=1$ and if not, we can take $s_1(z)=K_a(z)$, where $K_a$ denotes the normalized reproducing kernel at $a$, and $g(a)=0$.
\begin{proposition}
If $g$ is an inner function, then $M_g$ has the double commutant property.
\end{proposition}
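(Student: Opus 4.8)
The plan is to mimic the proof of Proposition~\ref{doss}, using in place of the fibration over $z^{n}$ the Wold-type decomposition $H^{2}(\mathbb{D})=\bigoplus_{n\ge1}s_{n}(z)H^{2}[g]$ recalled just above, and then to appeal to Lemma~\ref{corte}. The case of constant $g$ is trivial, so I assume $g$ is nonconstant.

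First I would show that any $S\in\{M_{g}\}''$ is an analytic Toeplitz operator: since $M_{z}$ commutes with $M_{g}$, the operator $S$ commutes with $M_{z}$, and hence $S=M_{h}$ for some $h\in H^{\infty}(\mathbb{D})$ (Theorem~\ref{univalent}(a) with $\varphi=z$). Next I would introduce the orthogonal projection $T$ of $H^{2}(\mathbb{D})$ onto the first summand $s_{1}H^{2}[g]$; explicitly, if $f=\sum_{n\ge1}s_{n}F_{n}(g)$ then $Tf=s_{1}F_{1}(g)$. Because multiplication by $g$ sends $s_{n}F(g)$ to $s_{n}(zF)(g)$, it leaves each summand invariant, so $TM_{g}=M_{g}T$; thus $T\in\{M_{g}\}'$ and $S=M_{h}$ must commute with $T$. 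Applying this to the vector $s_{1}$ — which lies in $s_{1}H^{2}[g]$ because $1=F(g)$ with $F\equiv1$, whence $Ts_{1}=s_{1}$ — gives $hs_{1}=M_{h}Ts_{1}=TM_{h}s_{1}=T(hs_{1})$, i.e. $hs_{1}\in s_{1}H^{2}[g]$, so $hs_{1}=s_{1}\,\varphi(g)$ for some $\varphi\in H^{2}(\mathbb{D})$. Since $s_{1}$ (which is $1$, or a normalized reproducing kernel $K_{a}$ with $g(a)=0$) is zero-free on $\mathbb{D}$, dividing yields $h=\varphi\circ g$.

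It then remains to see that $\varphi\in H^{\infty}(\mathbb{D})$, for then Lemma~\ref{corte} applies with $B=g$ and shows $S=M_{h}\in\overline{\textrm{Alg}(M_{g})}^{WOT}$; together with the automatic reverse inclusion this gives the double commutant property. For this, I would observe that $|\varphi|\le\|h\|_{\infty}$ on $g(\mathbb{D})$; since $g$ is a nonconstant inner function, $\mathbb{D}\setminus g(\mathbb{D})$ is a set of logarithmic capacity zero ($g(\mathbb{D})=\mathbb{D}$ when $g$ is a finite Blaschke product, and this is Frostman's theorem in general), and a bounded analytic function extends analytically across a polar set, so indeed $\varphi\in H^{\infty}(\mathbb{D})$.

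The bookkeeping around the decomposition, and the identification of $\{M_{z}\}'$ with the analytic Toeplitz operators, are routine. The one step that is not purely formal is the last one: upgrading $\varphi$ from $H^{2}$ to $H^{\infty}$ relies on the fact that a nonconstant inner function omits only a set of capacity zero, and this is exactly where the hypothesis that $g$ is inner — not merely that $M_{g}$ is an isometry with closed range — enters essentially.
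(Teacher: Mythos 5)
Your proof is correct and follows essentially the same route as the paper: the Wold-type decomposition $H^2(\mathbb{D})=\bigoplus_{n\geq 1} s_n H^2[g]$, the projection $T$ onto $s_1 H^2[g]$, the identity $hs_1=T(hs_1)=s_1\varphi(g)$ with $s_1$ zero-free, and then Lemma \ref{corte}. The only difference is that you justify the passage from $\varphi\in H^2(\mathbb{D})$ to $\varphi\in H^\infty(\mathbb{D})$ via Frostman's theorem, whereas the paper simply asserts it; your justification is sound (and since $\varphi$ is already analytic on all of $\mathbb{D}$ and bounded by $\|h\|_\infty$ on the dense open set $g(\mathbb{D})$, continuity alone already gives the bound everywhere).
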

\begin{proof}
If $X\in \{M_g\}''$, then $X=M_h$, $h\in H^\infty(\mathbb{D})$. It is sufficient to show that $h(z)=H(g)$ for some $H\in H^\infty(\mathbb{D})$.
If $f\in H^2(\mathbb{D})$, then $f(z)=\sum_{n=1}^\infty s_n(z)F_n(g)$ for $F_n\in H^2(\mathbb{D})$ $n\geq 1$. And $s_1(z)$ is $1$ or $K_a(z)$.
We consider the projection operator
$Tf=s_1(z)F_1(g)$ that is bounded on $H^2(\mathbb{D})$ (see \cite{lance}) and commutes with $M_g$. Therefore, forcing $M_h$ to commute with $T$ we get
$$
M_hTs_1(z)=s_1(z)h(z)=T(hs_1(z))=s_1(z)H_1(g).
$$
Therefore $h=H_1(g)$. Since $g$ is inner and $h\in H^\infty(\mathbb{D})$ we get $H_1\in H^\infty(\mathbb{D})$ as we wanted to prove.
\end{proof}

For future reference, we can deduce the following corollary.
\begin{corollary}
\label{doblen2}
Assume that $B$ is an inner function.
If $X\in \{M_{B}\}''$, then there exists $h\in H^\infty(\mathbb{D})$ such that $X=M_{h(B)}$.
\end{corollary}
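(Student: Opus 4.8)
The plan is to observe that the assertion is already contained in the proof of the preceding Proposition; one only has to isolate the explicit description of a generic element of $\{M_B\}''$ that the argument produces, rather than the weaker membership statement $\overline{\textrm{Alg}(M_B)}^{WOT}\subset\{M_B\}''$.

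First I would note that $M_z\in\{M_B\}'$, since analytic Toeplitz operators commute with one another ($M_zM_B=M_{zB}=M_BM_z$); hence any $X\in\{M_B\}''$ commutes with $M_z$, so $X\in\{M_z\}'$ and therefore $X=M_h$ for some $h\in H^\infty(\mathbb{D})$ by Theorem \ref{univalent}(a) applied to the univalent symbol $z$. Next, invoking the Wold decomposition $H^2(\mathbb{D})=\sum_{n=1}^\infty\oplus\, s_n(z)\,H^2[B]$, with $s_1$ equal to $1$ or to a normalized reproducing kernel $K_a$ at a zero $a$ of $B$, and $\{s_n\}$ an orthonormal basis of $H^2(\mathbb{D})\ominus BH^2(\mathbb{D})$, I would consider the projection $Tf=s_1(z)F_1(B)$, where $f=\sum_{n\geq1}s_n(z)F_n(B)$. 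As recorded in the proof of the Proposition, $T$ is bounded (see \cite{lance}) and lies in $\{M_B\}'$, so $M_h$ must commute with $T$.

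Evaluating this commutation on $s_1$ yields $s_1(z)\,h(z)=M_hTs_1(z)=T\bigl(h\,s_1(z)\bigr)=s_1(z)\,H_1(B)$ for some $H_1\in H^\infty(\mathbb{D})$, whence $h=H_1(B)$; since $B$ is inner and $h$ is bounded, $H_1$ is bounded. Renaming $H_1$ as $h$ gives $X=M_{h(B)}$, which is the claim.

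There is essentially no obstacle, since every ingredient — boundedness of $T$ and its membership in $\{M_B\}'$, the algebraic identity forcing $h=H_1(B)$, and the passage from $h$ bounded to $H_1$ bounded — has already been checked in the proof of the Proposition. The only mild point of care is this last implication: it genuinely uses that $B$ is inner, and would fail for a general bounded symbol.
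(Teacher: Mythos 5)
Your proposal is correct and follows essentially the same route as the paper: the corollary is obtained by unwinding the proof of the preceding Proposition, first getting $X=M_h$ from commutation with $M_z$ via Shields--Wallen, then using the Wold-decomposition projection $T$ onto $s_1H^2[B]$ to force $h=H_1(B)$ with $H_1\in H^\infty(\mathbb{D})$. No substantive difference from the paper's intended deduction.
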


\section{Double commutant property for analytic Toeplitz operators induced by entire functions and functions in the Thomson-Cowen class.}
\label{seccioncinco}
In this section, we will explore when an analytic Toeplitz operator $M_\varphi$ 
induced by an entire function $\varphi$ or a function of the Thomson-Cowen class has the double commutant property. Although we will do a unified study of both classes, to motivate the reader, we are going to look at some examples.

Our double commutant property for analytic Toeplitz operators induced by entire functions uses a result by Baker, Deddens, and Ullman \cite{bakerdeddens}.

Let us denote by $\gamma$ the unit circle $\{e^{it}\,\,:\,\,0\leq t\leq 2\pi\}$.
 Assume that $\varphi$ is analytic on $\overline{\mathbb{D}}$, if $a\notin \varphi(\gamma)$ we denote
 by $n(\varphi(\gamma),a)$ the winding number of $\varphi$ about $\gamma$ and we set
 $$
 k(\varphi)=\inf\{n(\varphi(\gamma),a)\,:\,n(\varphi(\gamma),a)\neq 0\}.
 $$
\begin{theorem}[Baker-Deddens-Ullman] If $\varphi$ is a non-constant entire function and $k=k(\varphi)$ then there exists an entire function $h$  such that $\varphi(z)=h(z^k)$  and $k(h)=1$. 
\end{theorem}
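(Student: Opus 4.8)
The plan is to reduce to a \emph{primitive} symbol and then manufacture a rotational symmetry of $\varphi$ from a winding-number-minimal component of $\mathbb{C}\setminus\varphi(\gamma)$.

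First I would record the reduction. Every non-constant entire function factors uniquely as $\varphi(z)=\psi(z^{m})$ with $m\ge 1$ maximal — equivalently $m=\gcd\{n\ge 1:\text{the }n\text{-th Taylor coefficient of }\varphi\text{ is nonzero}\}$ — and then $\psi$ is primitive, meaning $\psi$ is not a function of $z^{j}$ for any $j\ge 2$. Since $\varphi(\gamma)=\psi(\gamma)$ (as $\{e^{imt}\}=\gamma$) and the substitution $w=z^{m}$ multiplies zero-counts in $\mathbb{D}$ by $m$, the argument principle gives $n(\varphi(\gamma),a)=m\,n(\psi(\gamma),a)$ for all $a\notin\varphi(\gamma)$, so $k(\varphi)=m\,k(\psi)$. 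Hence it is enough to prove that a primitive non-constant entire $\psi$ has $k(\psi)=1$; then $h:=\psi$, $k=m$ works (and $h$ is clearly unique).

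So assume $\psi$ is primitive and, for contradiction, that $k:=k(\psi)\ge 2$ (the infimum defining $k(\psi)$ is over a nonempty set of positive integers, so it is attained). I would use that $\psi$ is continuous on $\overline{\mathbb{D}}$ with $\psi'$ having finitely many zeros there, and that $\psi(\gamma)$ is a compact connected set, whence each bounded component of its complement is simply connected and $a\mapsto n(\psi(\gamma),a)$ is a nonnegative constant on each component. Pick a (necessarily bounded) component $V$ on which this constant equals the minimal positive value $k$; over $V$, $\psi$ restricts to a proper branched $k$-sheeted covering $\psi^{-1}(V)\cap\mathbb{D}\to V$, which becomes a genuine $k$-sheeted covering $\Sigma_{0}:=\psi^{-1}(V_{0})\cap\mathbb{D}\to V_{0}$ after deleting from $V$ the finitely many critical values of $\psi$ it contains. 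Choosing a regular base point $a_{0}\in V_{0}$, its preimages $z_{1},\dots,z_{k}\in\mathbb{D}$, and the local inverse branches $g_{1},\dots,g_{k}$, these branches continue analytically over $V_{0}$ with values in $\mathbb{D}$ and give a monodromy subgroup $\Gamma\le S_{k}$. For any $\Gamma$-orbit $O$, the elementary symmetric functions of $\{g_{i}:i\in O\}$ are single-valued and bounded on $V_{0}$, so they extend holomorphically to $V$; hence the preimages of $a\in V$ lying in $O$ are exactly the roots of a monic polynomial $P_{O}(a,X)$ with bounded holomorphic coefficients on $V$, with $\psi(X)\equiv a$ on $\{P_{O}=0\}$. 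I would then argue, from minimality of $k$, that $\Gamma$ is transitive, so there is a single degree-$k$ polynomial $P(a,X)$, and moreover that $P(a,X)=X^{k}-c(a)$ for a bounded holomorphic $c$ on $V$ — equivalently, that the power sums $\sum_{i}g_{i}(a)^{j}$ vanish for $1\le j\le k-1$. Granting this, $\psi(X)\equiv a$ on $X^{k}=c(a)$ yields $\psi(\omega z)=\psi(z)$ on the non-empty open set $\psi^{-1}(V)\cap\mathbb{D}$ with $\omega=e^{2\pi i/k}$; by the identity theorem $\psi(\omega z)\equiv\psi(z)$ on $\mathbb{C}$, so $\psi$ is a function of $z^{k}$, contradicting primitivity since $k\ge 2$.

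The heart of the matter — and the step I expect to be hardest — is the collapse $P(a,X)=X^{k}-c(a)$, i.e.\ the assertion that a winding-number-minimal branched cover is cyclic with deck group generated by $z\mapsto\omega z$. Already transitivity of $\Gamma$ demands care: a proper $\Gamma$-invariant subset carves out a subcover of degree $p<k$ whose frontier in $\overline{\mathbb{D}}$ lies over $\psi(\gamma)$, and one must organize that frontier into a cycle witnessing a positive winding number strictly below $k$. And the vanishing of the power sums $\sum_{i}g_{i}^{j}$ for $j<k$ is precisely where the \emph{global} nature of $\psi$ (entire, not merely holomorphic on $\overline{\mathbb{D}}$) must enter; detecting the rotational identity on any open subset of $\mathbb{D}$ already suffices, since it then propagates to all of $\mathbb{C}$.
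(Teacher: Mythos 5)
A preliminary remark: the paper does not prove this statement at all --- it is quoted as a known result of Baker, Deddens and Ullman \cite{bakerdeddens} --- so there is no internal proof to compare with, and your attempt has to be judged on its own. Its outer shell is sound: writing $\varphi(z)=\psi(z^{m})$ with $m$ the gcd of the exponents of the nonzero Taylor coefficients, noting that $\varphi(\gamma)$ is $\psi(\gamma)$ traversed $m$ times so that $n(\varphi(\gamma),a)=m\,n(\psi(\gamma),a)$ and hence $k(\varphi)=m\,k(\psi)$, correctly reduces the theorem to the claim that a primitive non-constant entire function has minimal positive winding number $1$. But that claim \emph{is} the theorem, and your argument for it rests on two assertions that you explicitly leave unproved: (i) that the monodromy of the branched cover over a minimal-winding component $V$ is transitive, and (ii) that the monic polynomial $P(a,X)$ collapses to $X^{k}-c(a)$, i.e.\ that each fibre over $V$ is a full orbit of $z\mapsto e^{2\pi i/k}z$. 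Assertion (ii) is essentially a restatement of the desired identity $\psi(\omega z)\equiv\psi(z)$, so saying that ``the power sums vanish'' names the goal rather than proving it. Assertion (i) is also not routine: a proper invariant subset of the fibre gives a proper sub-map $U_{1}\to V$ of degree $p<k$, and the argument principle on $\partial U_{1}$ produces a cycle supported on $\psi(\gamma)$ with winding number $p$ about $V$ --- but that cycle is not the curve $\psi(\gamma)$ itself, so it does not contradict the minimality of $k(\psi)$, which is defined only through $\psi(\gamma)$.

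The warning you sound yourself --- that entirety ``must enter'' somewhere --- is exactly where the proof is missing, and it is not a formality. The statement fails for functions merely analytic on $\overline{\mathbb{D}}$: take $B(z)=(z-a)/(1-\overline{a}z)$ with $a\neq 0$ and $\varphi=B^{2}$. Then $\varphi(\gamma)$ is the unit circle traversed twice, so $n(\varphi(\gamma),c)=2$ for every $c\in\mathbb{D}$ and $k(\varphi)=2$; yet $\varphi$ is not of the form $h(z^{2})$, because the deck transformation permuting the fibres is the M\"obius involution attached to $B$, not $z\mapsto -z$. Every local datum in your argument (properness, degree $k$, monodromy, bounded symmetric functions of the branches) is equally available in this example, so no argument built only from those ingredients can yield $P(a,X)=X^{k}-c(a)$. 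A correct proof must contain a specific mechanism converting the covering structure over $V$ into a \emph{rotational} symmetry of an entire function, and that mechanism --- the actual content of the Baker--Deddens--Ullman paper --- is absent here. As it stands the proposal is a plan with the decisive step acknowledged but not carried out.
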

As a by-product of this result, they obtained that if
$\varphi(z)=h(z^k)$ then $\{M_\varphi\}'=\{M_{z^k}\}'$.
Let us see the following geometric necessary condition, if the points of the region $\varphi(\mathbb{D})$ have different winding numbers with respect to the curve $\varphi(\gamma)$ then $M_\varphi$ do not have the double commutant property.

\begin{proposition}
\label{geometric}
Assume that $\varphi$ is an entire function with
$k(\varphi)=k$. If there exists a point $c\in 
\varphi(\mathbb{D})$ such that 
$n(\varphi(\gamma),c)=p$ and 
 $p>k=k(\varphi)$  then $M_\varphi$ does not have the double commutant property.
\end{proposition}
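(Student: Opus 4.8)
The plan is to mimic Proposition~\ref{singlecover}: exhibit a single operator that lies in $\{M_\varphi\}''$ but not in $\overline{\textrm{Alg}(M_\varphi)}^{WOT}$, and separate the two by a weak-operator-continuous functional. By the Baker--Deddens--Ullman Theorem I would first write $\varphi(z)=h(z^k)$ with $h$ a non-constant entire function; as recorded right after that theorem, $\{M_\varphi\}'=\{M_{z^k}\}'$, hence $\{M_\varphi\}''=\{M_{z^k}\}''$, and in particular $M_{z^k}\in\{M_\varphi\}''$. This $M_{z^k}$ is the intended witness.

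The next step converts the winding-number gap into a failure of injectivity for $h$ on $\mathbb{D}$. Parametrizing $\gamma$ by $e^{it}$, the curve $\varphi\circ\gamma$ is $t\mapsto h(e^{ikt})$, that is, $h(\gamma)$ traversed $k$ times, so additivity of the winding number gives $n(\varphi(\gamma),c)=k\,n(h(\gamma),c)$. Since $\varphi(\gamma)=h(\gamma)$ as point sets, $c\notin h(\gamma)$, so $n(h(\gamma),c)$ is a genuine integer; it equals $p/k$, and $p/k>1$ because $p>k\geq 1$, whence $n(h(\gamma),c)\geq 2$. As $h$ is entire, the argument principle then forces $h-c$ to have at least two zeros in $\mathbb{D}$ counted with multiplicity; but an injective holomorphic map has no critical points, so if $h$ were injective on $\mathbb{D}$ the function $h-c$ would have at most one zero there, and that one simple. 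Hence $h$ is not injective on $\mathbb{D}$, and there are distinct $u,v\in\mathbb{D}$ with $h(u)=h(v)$. Choosing $a',b'\in\mathbb{D}$ with $(a')^k=u$ and $(b')^k=v$ yields $a'\neq b'$, $(a')^k\neq(b')^k$, and $\varphi(a')=h(u)=h(v)=\varphi(b')$.

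Then I would set $\Lambda(X)=\langle X1,k_{a'}-k_{b'}\rangle=(X1)(a')-(X1)(b')$ on $\mathcal{L}(H^2(\mathbb{D}))$, which is weak-operator continuous since it has the form $X\mapsto\langle Xf,g\rangle$. For any polynomial $q$ one has $\Lambda(q(M_\varphi))=q(\varphi(a'))-q(\varphi(b'))=0$, so $\Lambda$ vanishes on $\textrm{Alg}(M_\varphi)$ and, by continuity, on $\overline{\textrm{Alg}(M_\varphi)}^{WOT}$; whereas $\Lambda(M_{z^k})=\langle z^k,k_{a'}-k_{b'}\rangle=(a')^k-(b')^k\neq 0$. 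Since $M_{z^k}\in\{M_\varphi\}''$, this gives $\overline{\textrm{Alg}(M_\varphi)}^{WOT}\subsetneq\{M_\varphi\}''$, so $M_\varphi$ does not have the double commutant property.

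The delicate part is the middle step: one has to be certain that the strict inequality $p>k$ genuinely forces $h$ to be non-injective on the \emph{disk}, and --- this is what makes the functional effective --- that the preimages $u\neq v$ can be lifted through $z\mapsto z^k$ to points $a',b'$ that remain in $\mathbb{D}$, remain distinct, and satisfy $(a')^k\neq(b')^k$; if that last inequality failed, $\Lambda$ would annihilate $M_{z^k}$ as well and prove nothing. The other ingredients --- the Baker--Deddens--Ullman identity $\{M_\varphi\}'=\{M_{z^k}\}'$, the triviality $M_{z^k}\in\{M_{z^k}\}''$, and the weak-operator continuity of $\Lambda$ --- are routine.
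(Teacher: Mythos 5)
Your proof is correct, but it follows a genuinely different route from the one in the paper. Both arguments start from the Baker--Deddens--Ullman factorization $\varphi=h(z^k)$ and the resulting identity $\{M_\varphi\}''=\{M_{z^k}\}''$, and both finish by separating a single element of $\{M_\varphi\}''$ from $\overline{\textrm{Alg}(M_\varphi)}^{WOT}$ with a WOT-continuous functional of the form $X\mapsto\langle X1,g\rangle$. The difference lies in the choice of witness and of separating vector. The paper deduces from $k(h)=1$ and the Argument Principle that $h$ singly covers an open set while failing to be univalent, feeds this into the machinery of Section~\ref{secciontres} to produce $f\in H^2(\mathbb{D})$ annihilating all polynomials in $h$, and takes as witness $M_{g(z^k)}$ for a suitable $g\in H^\infty(\mathbb{D})$; placing that witness inside $\{M_\varphi\}''$ costs Lemma~\ref{corte} together with the double commutant property of $M_{z^k}$ (Proposition~\ref{doss}). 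You instead take the witness to be $M_{z^k}$ itself, whose membership in $\{M_{z^k}\}''=\{M_\varphi\}''$ is immediate, and you pay for this economy with a small piece of complex analysis: the identity $n(\varphi(\gamma),c)=k\,n(h(\gamma),c)$ forces $n(h(\gamma),c)\ge 2$, hence (Argument Principle plus the fact that injective holomorphic maps have no critical points) $h$ is not injective on $\mathbb{D}$, and lifting the two preimages $u\neq v$ through $z\mapsto z^k$ produces $a',b'\in\mathbb{D}$ with $\varphi(a')=\varphi(b')$ but $(a')^k\neq(b')^k$, so that $X\mapsto\langle X1,k_{a'}-k_{b'}\rangle$ does the separation. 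Your argument is in fact closer in spirit to the paper's proof of Corollary~\ref{geometric2} (where the witness is also $M_{z^k}$ and the test vectors are reproducing kernels) than to its proof of Proposition~\ref{geometric}; it is more self-contained, bypassing Section~\ref{secciontres}, Lemma~\ref{corte} and Proposition~\ref{doss}, at the price of the extra valence argument for $h$. The points you flag as delicate all hold: $p/k$ is automatically an integer and is $\ge 2$, the $k$-th roots of $u\neq v$ stay in $\mathbb{D}$, and they necessarily have distinct $k$-th powers, so the functional does not accidentally annihilate $M_{z^k}$.
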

\begin{proof}
By the Baker-Deddens-Ullman result, there is an open 
subset $W\subset \varphi(\mathbb{D})$ such that 
$n(\varphi(\gamma),a)=k=k(\varphi)$ for all $a\in W$, 
and $\varphi(z)=h(z^k)$ for some entire function $h$ 
which satisfies that $n(h(\gamma),a)=1$ for any $a\in 
W$.

By the Argument Principle (see \cite[p. 152]{ahlfors}), 
we obtain that $h$ single covers the open subset $W$. 
Moreover, since $n(\varphi(\gamma),c)=p
>k$ we obtain that $h$ is not univalent.
Hence, by results of Section \ref{secciontres},  we proved 
that $M_h$ do not have the double commutant property. 
Specifically, since the polynomials on $h$ are not 
dense in $H^2(\mathbb{D})$ there exists $f\in 
H^2(\mathbb{D})$ such that for any polynomial $p$, 
$\langle p(h),f\rangle=0$, and since 
$H^\infty(\mathbb{D})$ is dense in $H^2(\mathbb{D})$ 
there exists $g\in H^\infty(\mathbb{D})$ such that 
$\langle g,f\rangle\neq 0$. 

As a consequence we get that
$
\langle p(h(z^k)),f(z^k)\rangle=0
$
and $\langle g(z^k),f(z^k)\rangle\neq 0$.
Denoting $\gimel(X)=\langle X1,f\rangle$, this linear 
functional in $\mathcal{L}(H^2(\mathbb{D}))$ satisfies:
\begin{equation}
\label{una}
\gimel(p(M_\varphi))=\langle 
p(M_{h(z^k)})1,f(z^k)\rangle=0
\end{equation}
and
\begin{equation}
\label{dos}
\gimel(M_{g(z^k)})= \langle 
M_{g(z^k)}1,f(z^k)\rangle\neq 0
\end{equation}
for any polynomial $p$. We claim that  $M_{g(z^k)}\in 
\{M_\varphi\}''$. In such a case, equations (\ref{una}) 
and (\ref{dos}) proves that the operator $M_\varphi$ 
do not have the double commutant property.

Indeed, by Baker-Deddens-Ullman's result we know that
$\{M_\varphi\}'=\{M_{z^k}\}'$, therefore 
$\{M_\varphi\}''=\{M_{z^k}\}''$. By Corollary \ref{corte} 
we obtain that $M_{g(z^k)}\in \overline{\textrm{Alg}
(M_{z^k})}^{WOT}$, and since $M_{z^k}$ has the double 
commutant property we get that $M_{g(z^k)}\in 
\{M_{z^k}\}''=\{M_\varphi\}''$ as we desired. Therefore 
we have shown that $M_\varphi$ do not have the double 
commutant property.
\end{proof}

By the argument principle, we can deduce from Proposition \ref{geometric} the following special case included in Corollary \ref{geometric2}. 
Nevertheless, instructively, we include an alternative proof that uses the pigeonhole principle.

\begin{corollary}
\label{geometric2}
Assume that $\varphi$ is an entire function with
$k(\varphi)=k$. If the image of a point under $\varphi$ has $p>k=k(\varphi)$ pre-images, then $M_\varphi$ does
not have the double commutant property.
\end{corollary}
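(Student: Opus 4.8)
The plan is to give the self-contained, pigeonhole-based argument promised in the preceding remark, rather than routing through Proposition \ref{geometric} and the argument principle. First I would fix a point $c$ admitting $p>k$ distinct preimages $z_1,\dots,z_p\in\mathbb{D}$ under $\varphi$, and invoke the Baker-Deddens-Ullman theorem to write $\varphi(z)=h(z^k)$ for an entire function $h$ (the normalization $k(h)=1$ will not be used), together with the accompanying identity $\{M_\varphi\}'=\{M_{z^k}\}'$, so that $\{M_\varphi\}''=\{M_{z^k}\}''$. The one substantive step is to derive from $p>k$ the non-univalence of $h$ on $\mathbb{D}$: grouping the $z_i$ by the value $w_i:=z_i^k\in\mathbb{D}$, note that for $w\neq 0$ the fiber $\{z\in\mathbb{D}:z^k=w\}$ has exactly $k$ points, while the fiber over $0$ is the singleton $\{0\}$; hence each value $w$ is attained by at most $k$ of the $z_i$. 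Since $p>k$, the pigeonhole principle forces two distinct $w,w'\in\mathbb{D}$ with $h(w)=h(w')=c$, so $h$ is not univalent.

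From there I would transcribe the functional-analytic conclusion of the proof of Proposition \ref{geometric} (modelled in turn on Proposition \ref{singlecover}). Non-univalence of $h$ prevents the polynomials in $h$ from being dense in $H^2(\mathbb{D})$ — the nonzero vector $k_w-k_{w'}$ is orthogonal to $p(h)$ for every polynomial $p$ — so there is $f_0\in H^2(\mathbb{D})\setminus\{0\}$ with $\langle p(h),f_0\rangle=0$ for all $p$, and, by density of $H^\infty(\mathbb{D})$ in $H^2(\mathbb{D})$, some $g\in H^\infty(\mathbb{D})$ with $\langle g,f_0\rangle\neq 0$. Since $u\mapsto u(z^k)$ is an isometry of $H^2(\mathbb{D})$, setting $f(z):=f_0(z^k)$ yields $\langle p(\varphi),f\rangle=\langle p(h),f_0\rangle=0$ for every polynomial $p$, whereas $\langle g(z^k),f\rangle=\langle g,f_0\rangle\neq 0$. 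Then $\gimel(X):=\langle X1,f\rangle$ is a functional on $\mathcal{L}(H^2(\mathbb{D}))$, continuous in the weak operator topology, which annihilates $\overline{\textrm{Alg}(M_\varphi)}^{WOT}$ because $\gimel(p(M_\varphi))=\langle p(\varphi),f\rangle=0$; but $\gimel(M_{g(z^k)})=\langle g(z^k),f\rangle\neq 0$, while $M_{g(z^k)}\in\overline{\textrm{Alg}(M_{z^k})}^{WOT}\subseteq\{M_{z^k}\}''=\{M_\varphi\}''$ by Lemma \ref{corte}. Hence $\overline{\textrm{Alg}(M_\varphi)}^{WOT}\subsetneq\{M_\varphi\}''$, i.e.\ $M_\varphi$ does not have the double commutant property.

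I do not anticipate a real obstacle; the only point needing attention is that $z\mapsto z^k$ is $k$-to-one only off the origin, so the fiber over $0$ has to be counted separately. This is harmless, since $p>k$ already gives $p\geq k+1$, so two distinct values $w$ with $h(w)=c$ occur whether or not some $z_i$ equals $0$. Apart from that bookkeeping, the corollary is a direct repackaging of the tools developed in Sections \ref{secciontres} and \ref{seccioncuatro}.
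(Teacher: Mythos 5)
Your argument is correct, but it is not the pigeonhole proof the paper actually gives; it is closer to a specialization of the proof of Proposition \ref{geometric}. The paper's own proof takes $M_{z^k}$ itself as the witness: after translating so that $c=0$, the reproducing kernels $k_{a_j}$ lie in $\ker M_\varphi^\star$, so if a net $p_d(M_\varphi)$ converged to $M_{z^k}$ in the weak operator topology, testing against $\langle\,\cdot\,k_{a_j},1\rangle$ would force $\lim_d p_d(0)=\overline{a_j}^{\,k}$ for every $j$; the pigeonhole principle (applied to the $k$-valence of $z\mapsto z^k$, exactly as in your fiber count) produces two indices with $a_{i_0}^{k}\neq a_{j_0}^{k}$, a contradiction, and since $M_{z^k}\in\{M_{z^k}\}''=\{M_\varphi\}''$ by Baker--Deddens--Ullman, this already denies the double commutant property without any auxiliary functional. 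You instead use the pigeonhole count one step earlier, to conclude that $h$ in the factorization $\varphi=h(z^k)$ is non-univalent (your bookkeeping about the fiber over $0$ is right, and $w=z_i^k\in\mathbb{D}$ as needed), and then run the annihilating-functional machinery of Propositions \ref{singlecover} and \ref{geometric}: $f_0=k_w-k_{w'}$ kills all $p(h)$, composition with $z^k$ is an isometry of $H^2(\mathbb{D})$ so $f=f_0(z^k)$ kills all $p(\varphi)$, and Lemma \ref{corte} places $M_{g(z^k)}$ in $\{M_\varphi\}''$ while $\gimel(M_{g(z^k)})\neq0$. Both routes are sound; the paper's is more economical (the separating operator is the explicit $M_{z^k}$ and no appeal to Lemma \ref{corte} or to a density argument in $H^2(\mathbb{D})$ is needed), whereas yours makes transparent that the corollary is really the statement ``$h$ non-univalent $\Rightarrow$ no double commutant property for $h(z^k)$'' and exhibits a concrete annihilating vector, which ties the corollary more visibly to Proposition \ref{geometric}.
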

\begin{proof}

Assume that the complex number
$c\in\varphi(\mathbb{D})\subset \mathbb{C}$ has $p$ preimages ($p>k$), that  is,  there exist 
$a_1,\cdots,a_p\in \mathbb{D}$ such that
$\varphi(a_1)=\varphi(a_2)=\cdots =\varphi(a_p)=c$. 
Since $M_\varphi-c I=M_{\varphi-c}$ and the commutant 
is invariant under translations by the identity, we can
assume without loss that 
$\varphi(a_1)=\cdots=\varphi(a_p)=0$.

Since $\varphi$ is an entire function, by Baker-Deddens-Ullman's result, we can deduce that
$\{M_\varphi\}'=\{M_{z^k}\}'$ for $k=k(\varphi)$. 
Therefore, $\{M_\varphi\}''=\{M_{z^k}\}''$. In particular,
$M_{z^k}\in \{M_\varphi\}''$. 

We claim that $M_{z^k}\notin \overline{\textrm{Alg}
(M_\varphi)}^{WOT}$. Assume by contradiction that there
exists a net $p_d(\cdot)$, $d\in D$ such that for any 
$h,g\in H^2$:

$$\lim_d \langle p_d(M_\varphi)^\star 
h,g\rangle=\langle M_{z^n}^\star h,g\rangle.$$

Set $k_a(z)=1/(1-\overline{a}z)$ the reproducing kernel
at $a\in \mathbb{D}$.
Since $k_{a_j}\in \textrm{Ker}(M_\varphi^\star)$ for  
$j=1,\cdots, p$, we get:
$$
\lim_d\langle p_d(M_\varphi)^\star 
(k_{a_j}),1\rangle=\lim_d p_d(0) =\langle 
M_{z^{k(\varphi)}}^\star 
(k_{a_j}),1\rangle=\overline{a_j}^{k(\varphi)}.
$$
But, since $p>k(\varphi)\geq 1$, by using the 
$k(\varphi)$-valence of $z^{k(\varphi)}$ and the 
pigeonhole principle, there exist at least two values 
$1\leq i_0<j_0\leq p$ such that 
$a_{i_0}^{k(\varphi)}\neq 
a_{j_0}^{k(\varphi)}$, a contradiction.
\end{proof}

The equality $\varphi(z)=h(z^k)$ in the Baker-Deddens-
Ullman result suggests that the double commutant
property for an analytic Toeplitz operator induced for an entire function will be closely related to the
approximation of the polynomials on $h(z ^k)$. But in this case this approximation cannot be in $H^\infty(\mathbb{D})$ because such functions are not univalent. 
This fact suggests to consider small spaces. In fact, given $g$ an inner function, let us denote
$$
H^{\infty} [g]=\{f\circ g\,:\,f\in H^\infty(\mathbb{D})\}. 
$$
We say that the polynomials of $\varphi(g)$ are weak star dense in $H^\infty [g]$ if the weak star closure of polynomials on $\varphi(g)$ contains the space $H^\infty[g]$, and in such a case we will say that $\varphi(g)$ is a weak star generator on $H^\infty[g]$.



\begin{lemma}
\label{sub}
Assume that $g$ is an inner function and suppose that $g(\mathbb{D})=\mathbb{D}$. The function
$\varphi$ is a weak star generator of
$H^\infty(\mathbb{D})$ if and only if $\varphi(g)$ is a
weak star generator of $H^\infty[g]$.
\end{lemma}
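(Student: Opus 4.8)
The plan is to use that precomposition with $g$ is an isometry $C_g\colon H^\infty(\mathbb{D})\to H^\infty[g]$, $C_g f=f\circ g$, which is compatible with the weak star topology, and then to transport the transfinite derived sets of the polynomials in $\varphi$ onto those of the polynomials in $\varphi(g)$, in the spirit of the proof of Theorem \ref{main1}. Throughout, $\mathcal{P}$ denotes the linear subspace of polynomials in $\varphi$ and $\mathcal{Q}$ the linear subspace of polynomials in $\varphi(g)$.

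First I would record the elementary properties of $C_g$. Since $g(\mathbb{D})=\mathbb{D}$, a function $f\in H^\infty(\mathbb{D})$ with $f\circ g=0$ vanishes on $g(\mathbb{D})=\mathbb{D}$, so $C_g$ is injective; and $\|f\circ g\|_\infty=\sup_{z\in\mathbb{D}}|f(g(z))|=\sup_{w\in\mathbb{D}}|f(w)|=\|f\|_\infty$, so $C_g$ is a linear isometry. By definition its range is the linear subspace $H^\infty[g]$, and $C_g$ carries $p(\varphi)$ to $p(\varphi)\circ g=p(\varphi(g))$, hence maps $\mathcal{P}$ onto $\mathcal{Q}$. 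In particular, for any set $\mathcal{M}\subseteq H^\infty(\mathbb{D})$ and any $n$ we have $C_g(\mathcal{M}\cap nB)=C_g(\mathcal{M})\cap nB$, where $B$ is the unit ball of $H^\infty(\mathbb{D})$.

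The second step is to transfer weak star convergence and then iterate. On every ball $nB$ the weak star topology is compact and metrizable (the predual of $H^\infty(\mathbb{D})$ is separable) and coincides with pointwise convergence on $\mathbb{D}$. Using $g(\mathbb{D})=\mathbb{D}$, a bounded sequence $h_k$ converges weak star to $h$ if and only if $h_k\circ g$ converges weak star to $h\circ g$: indeed $h_k(g(z))\to h(g(z))$ for every $z\in\mathbb{D}$ is the same statement as $h_k(w)\to h(w)$ for every $w\in\mathbb{D}$, and $\|h_k\circ g\|_\infty=\|h_k\|_\infty$. Combining this equivalence with $C_g(\mathcal{M}\cap nB)=C_g(\mathcal{M})\cap nB$ and the metrizability of the weak star topology on each $nB$ (so that the weak star closure inside $nB$ is the sequential one), one proves by transfinite induction that $(C_g\mathcal{M})^{(\alpha)}=C_g(\mathcal{M}^{(\alpha)})$ for every ordinal $\alpha$ and every linear subspace $\mathcal{M}\subseteq H^\infty(\mathbb{D})$, where $\mathcal{M}^{(\alpha)}$ are the Banach–Mazurkiewicz derived sets recalled before Theorem \ref{main1}; the successor step is where the displayed equivalence enters, and limit ordinals are immediate because $C_g$ commutes with unions. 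Applying this with $\mathcal{M}=H^\infty(\mathbb{D})$ (whose derived sets are all $H^\infty(\mathbb{D})$) shows that $H^\infty[g]=C_g(H^\infty(\mathbb{D}))$ is weak star closed; applying it with $\mathcal{M}=\mathcal{P}$ and choosing an ordinal $\beta$ at which both derived sequences stabilize (possible since the predual is separable) gives $\overline{\mathcal{Q}}^{*}=C_g\bigl(\overline{\mathcal{P}}^{*}\bigr)$.

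It then only remains to chain equivalences. Since $C_g$ is injective with range $H^\infty[g]$ and $\overline{\mathcal{Q}}^{*}\subseteq\overline{H^\infty[g]}^{*}=H^\infty[g]$,
\begin{align*}
\varphi \text{ is a weak star generator of } H^\infty(\mathbb{D})
&\iff \overline{\mathcal{P}}^{*}=H^\infty(\mathbb{D}) \\
&\iff C_g\bigl(\overline{\mathcal{P}}^{*}\bigr)=H^\infty[g]
\iff \overline{\mathcal{Q}}^{*}=H^\infty[g],
\end{align*}
and the last condition is exactly $H^\infty[g]\subseteq\overline{\mathcal{Q}}^{*}$, i.e.\ that $\varphi(g)$ is a weak star generator of $H^\infty[g]$. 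The one genuine obstacle is the induction in the third step: because the weak star topology on all of $H^\infty(\mathbb{D})$ is not metrizable, one cannot push $\overline{\mathcal{P}}^{*}$ through $C_g$ in a single stroke and must instead verify the compatibility of $C_g$ with the iterated sequential-closure construction, carrying along the bound given by the balls $nB$; everything else is routine.
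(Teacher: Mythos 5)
Your proof is correct and follows essentially the same route as the paper: both arguments transfer bounded pointwise (hence weak star sequential) convergence through composition with $g$, using $g(\mathbb{D})=\mathbb{D}$ to preserve sup-norms and to recover convergence at every point of $\mathbb{D}$, and then run a transfinite induction on the Banach--Mazurkiewicz derived sets. Your packaging via the isometry $C_g$, with the explicit observations that $C_g(\mathcal{M}\cap nB)=C_g(\mathcal{M})\cap nB$ and that $H^\infty[g]$ is weak star closed, is somewhat more systematic than the paper's, but the underlying mechanism is identical.
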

\begin{proof}
We denote $$\mathcal{P}=
\{p(\varphi)\,:\,p\,\textrm{polynomial}\}$$
and $$\mathcal{P}[g]=\{
p(\varphi(g))\,:\,p\,\textrm{polynomial}\}.$$

For $\alpha=1$, $h\in \mathcal{P}_*^{(1)}$  if and only if 
there 
exists a sequence of polynomials $p_n$ such that
\begin{equation}
\label{cond}
\|p_n(\varphi)\|_\infty \leq M\quad \textrm{and}\quad 
p_n(\varphi (a))\to h(a)\,\, \textrm{for each}\,a\in 
\mathbb{D},
\end{equation}
which implies that 
$\|p_n(\varphi(g))\|_\infty \leq M$ and 
$p_n(\varphi(g(a))\to h(g(a))$ for each $a\in \mathbb{D}$, therefore $h\in \mathcal{P}_{*}^{(1)}[g]$.

Conversely, if 
$\|p_n(\varphi(g))\|_\infty \leq M$ and
$p_n(\varphi(g(a))\to h(g(a))$ for each $a\in 
\mathbb{D}$, then   
$\|p_n(\varphi)\|_\infty \leq M$ and 
$p_n(\varphi(b))\to h(b)$ for each $b\in g(\mathbb{D})=\mathbb{D}$. That is, 
$h\in \mathcal{P}_{*}^{(1)}$.

The result follows because inductively we get that for
any ordinal $\alpha$ we have $h\in 
\mathcal{P}_{*}^{(\alpha)}$ if and only if $h(g)\in 
\mathcal{P}_{*}^{(\alpha)}[g]$.
\end{proof}

Let $\varphi\in H^\infty(\mathbb{D})$, we say that $\varphi$ is in the Thomson-Cowen class ($\mathcal{TC}(\mathbb{D})$) if there exists a point $\lambda\in \mathbb{D}$ such that the inner part of $\varphi-\varphi(\lambda)$ is a finite Blaschke product. It is well known that the class $\mathcal{TC}(\mathbb{D})$ contains all non-constant functions in $H^\infty(\overline{\mathbb{D}})$.
In the following, we state the remarkable theorem due to Thomson and Cowen (\cite{cowen,thomson2,thomson}):

\begin{theorem}[Thomson(1976)-Cowen(1978)]
Assume $\phi\in \mathcal{TC}(\mathbb{D})$. Then there exist a finite Blaschke product $B$ and a function $\psi\in H^\infty(\mathbb{D})$ such that $\phi=\psi(B)$ and $\{M_\phi\}'=\{M_B\}'$.
\end{theorem}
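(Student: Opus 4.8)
The plan is to split the statement into a \emph{factorization} part, which produces the finite Blaschke product $B$, and a \emph{commutant} part $\{M_\phi\}'=\{M_B\}'$, of which only one inclusion is substantial; both rest on upgrading the hypothesis $\phi\in\mathcal{TC}(\mathbb{D})$ to a geometric statement. Writing $\phi-\phi(\lambda)=B_\lambda F_\lambda$ with $B_\lambda$ a finite Blaschke product of degree $d$ and $F_\lambda$ outer, the function $\phi$ attains the value $\phi(\lambda)$ exactly $d$ times with multiplicity (outer functions do not vanish), and a Rouch\'e/normal-families argument then shows $\phi$ restricts to a proper $d$-to-one holomorphic map of $\phi^{-1}(U)$ onto a disk $U\ni\phi(\lambda)$. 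Thus $\phi$ has finite valence $n$, everything is controlled by the finite branched cover given by $\phi^{-1}$ over $\phi(\mathbb{D})$, and any inner $u$ with $\phi=g\circ u$ necessarily has finite fibres, hence is a finite Blaschke product whose degree divides $n$.

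\textbf{Step 1: the maximal inner factor.} Call $u$ an \emph{inner factor} of $\phi$ if $\phi=g\circ u$ for some $g\in H^\infty(\mathbb{D})$. The key lemma is that inner factors are \emph{directed}: given $\phi=g_1\circ u_1=g_2\circ u_2$, the common refinement of the two fibre partitions $\{z\sim z'\iff u_i(z)=u_i(z')\}$ is again the fibre partition of an inner function, because the quotient of $\mathbb{D}$ by such a partition is a finitely-sheeted, simply connected cover of $\mathbb{D}$, hence conformally a disk, with quotient map a finite Blaschke product. Since degrees are bounded by $n$ there is a \emph{greatest} inner factor $B$; writing $\phi=\psi\circ B$, maximality of $B$ means $\psi$ is \emph{primitive} (it has no nonconstant inner factor). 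Univalent symbols, and symbols that singly cover a neighbourhood as in Section \ref{secciontres}, are primitive, so this step subsumes both Theorem \ref{univalent} and Theorem \ref{deddeswong}.

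\textbf{Step 2: the primitive case $\{M_\psi\}'=\{M_z\}'$ — the heart.} Only the inclusion $\{M_\psi\}'\subset\{M_z\}'$ needs proof. Let $T\in\{M_\psi\}'$; then $T^\star$ commutes with $M_\psi^\star$, and since $M_\psi^\star k_a=\overline{\psi(a)}k_a$, for each regular value $\mu$ the finite-dimensional space $E_\mu=\textrm{ker}(M_\psi^\star-\overline\mu I)=\textrm{span}\{k_a:\psi(a)=\mu\}$ is $T^\star$-invariant. Exactly as in Theorem \ref{univalent}(a) it suffices to prove $T^\star k_a=\overline{h(a)}k_a$ for all $a$: then $a\mapsto h(a)$ is analytic (because $a\mapsto k_a$ is) and bounded (because $T^\star$ is), so $T=M_h$ and $T\in\{M_z\}'$, the reproducing kernels being total. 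So the real task is to show the holomorphic family of endomorphisms $\mu\mapsto T^\star|_{E_\mu}$ is \emph{diagonal in the tautological frame} $\{k_a:\psi(a)=\mu\}$. One argues by monodromy: the eigenspaces of $T^\star|_{E_\mu}$ vary holomorphically with $\mu$ and are permuted, as $\mu$ circles the branch points, exactly as the sheets of $\psi^{-1}$ are; if some eigenspace were not one of the lines $\mathbb{C}\,k_a$, the induced nontrivial partition of the fibres would be holomorphic and monodromy-invariant, hence, by the realisability lemma of Step 1, would be cut out by a nonconstant inner factor of $\psi$, contradicting primitivity. This forces $T^\star k_a=\overline{h(a)}k_a$ and finishes the primitive case.

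\textbf{Step 3: assembling the commutant equality, and the main obstacle.} From $\phi=\psi(B)$ and Lemma \ref{corte} we get $M_\phi\in\overline{\textrm{Alg}(M_B)}^{WOT}\subset\{M_B\}''$, hence $\{M_B\}'\subset\{M_\phi\}'$ (equivalently, use Corollary \ref{doblen2}). For the reverse inclusion one needs the \emph{relative} form of Step 2: decompose $H^2(\mathbb{D})=\bigoplus_{k\ge1}s_k(z)H^2[B]$ as in Section \ref{seccioncuatro}, so that $M_B$ acts block-diagonally as ``multiplication by $z$'' on each copy of $H^2[B]\cong H^2(\mathbb{D})$ and $M_\phi$ acts there as $M_\psi$; running the kernel/monodromy argument with the reproducing kernels of $H^2[B]$ in place of the $k_a$ and using primitivity of $\psi$ forces every $T\in\{M_\phi\}'$ to commute with $M_B$, giving $\{M_\phi\}'=\{M_B\}'$. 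The genuinely hard point, and what I expect to absorb most of the work, is Step 2 together with its realisability lemma in Step 1: turning the combinatorial ``no monodromy-invariant block partition'' consequence of primitivity into the analytic statement that $T^\star$ is scalar on each tautological line, and showing that an analytically varying, monodromy-invariant fibre partition is always the fibre partition of an inner function. Making these precise is where I would expect to have to reconstruct a substantial part of the Riemann-surface machinery of Thomson and Cowen.
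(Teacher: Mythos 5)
First, a point of reference: the paper does not prove this statement at all --- it is quoted as a classical theorem with citations to Thomson and Cowen --- so your proposal can only be measured against the original proofs. Your architecture (extract a maximal finite Blaschke factor $B$, settle the ``primitive'' case $\{M_\psi\}'=\{M_z\}'$, then transfer along the Wold-type decomposition of Section \ref{seccioncuatro}, with the easy inclusion $\{M_B\}'\subset\{M_\phi\}'$ via Lemma \ref{corte}) is a faithful outline of the Thomson--Cowen strategy. But the two steps you yourself identify as the heart are assertions rather than arguments, and at least one preliminary claim is false as stated, so this is a plan, not a proof.

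Concretely. (i) The hypothesis $\phi\in\mathcal{TC}(\mathbb{D})$ controls a \emph{single} value: the inner part of $\phi-\phi(\lambda)$ is a finite Blaschke product for one $\lambda$. It gives neither finite global valence nor control of the inner parts of $\phi-\mu$ at other values $\mu$, and your inference ``any inner $u$ with $\phi=g\circ u$ has finite fibres, hence is a finite Blaschke product'' fails twice over: a finite fibre over one point of the range does not make an inner function a finite Blaschke product (take $u(z)=z\exp\bigl(-(1+z)/(1-z)\bigr)$, which is inner with $u^{-1}(0)=\{0\}$), and pushing the inner--outer factorization of $g-g(c)$ through $u$ is itself delicate because an outer function composed with an inner function need not be outer. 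Extending control from the one value $\phi(\lambda)$ to enough other values is precisely where Thomson's work lies. (ii) The ``realisability lemma'' of Step 1 --- that the common refinement of two fibre partitions is the fibre partition of an inner function because the quotient is ``a finitely-sheeted, simply connected cover of $\mathbb{D}$, hence conformally a disk'' --- is exactly Cowen's Riemann-surface construction; nothing in your sketch makes the quotient a Riemann surface, shows it is simply connected, or identifies the quotient map as a finite Blaschke product. (iii) In Step 2 the operator $T^\star|_{E_\mu}$ has no reason to be diagonalizable, its eigenspaces need not vary holomorphically (eigenvalues collide and $E_\mu$ degenerates at branch points), and an eigenline not of the form $\mathbb{C}\,k_a$ does not visibly induce a partition of the fibre; so the monodromy argument, which carries the whole primitive case, is missing. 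Since the paper offers no proof to fall back on, each of (i)--(iii) would have to be supplied in full from \cite{thomson2,thomson,cowen} before the proposal could be accepted.
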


The following result characterizes when an element in the Thomson-Cowen class has the double commutant property.

\begin{theorem}
\label{entera}
Assume that $\varphi\in \mathcal{TC}(\mathbb{D})$ and set $\varphi=h(B)$ as a Thomson-Cowen factorization of $\varphi$.  The following conditions are equivalents
\begin{enumerate}
\item[a)] $M_\varphi$ has the double commutant property.
\item[b)] The polynomials on $\varphi$ are weak star dense in $H^{\infty}[B]$.
\end{enumerate}
\end{theorem}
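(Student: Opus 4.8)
The plan is to reduce the statement to the computation of two subspaces of $H^\infty(\mathbb{D})$ and then to invoke Remark~\ref{refesub}. Throughout, recall that $M_\varphi$ has the double commutant property exactly when $\overline{\textrm{Alg}(M_\varphi)}^{WOT}=\{M_\varphi\}''$, that all multiplication operators commute with $M_z$ so that $\textrm{Alg}(M_\varphi)$ and its WOT closure sit inside $\{M_z\}'$, and that under the standard identification of $H^\infty(\mathbb{D})$ with $\{M_z\}'$ the topology $\tau_{wot}$ is simply the restriction of the weak operator topology (an identification that uses no univalence).

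First I would pin down the double commutant. By the Thomson-Cowen theorem $\{M_\varphi\}'=\{M_B\}'$, hence $\{M_\varphi\}''=\{M_B\}''$. Since $B$ is a finite Blaschke product it is inner, so Corollary~\ref{doblen2} gives $\{M_B\}''\subseteq\{M_g:g\in H^\infty[B]\}$, while Lemma~\ref{corte} gives $M_{h(B)}\in\overline{\textrm{Alg}(M_B)}^{WOT}\subseteq\{M_B\}''$ for every $h\in H^\infty(\mathbb{D})$; therefore
$$
\{M_\varphi\}''=\{M_B\}''=\{M_g:g\in H^\infty[B]\}.
$$
In particular this is a WOT-closed subalgebra of $\mathcal{L}(H^2(\mathbb{D}))$, so $H^\infty[B]$ is a $\tau_{wot}$-closed subspace of $H^\infty(\mathbb{D})$, and Remark~\ref{refesub} applied with $\mathcal{M}=H^\infty[B]$ shows that $H^\infty[B]$ is in fact weak star closed in $H^\infty(\mathbb{D})$.

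Next I would pin down $\overline{\textrm{Alg}(M_\varphi)}^{WOT}$. Write $\mathcal{P}=\textrm{span}\{\varphi^n:n\ge 0\}$ for the linear subspace of polynomials in $\varphi$, so that $\textrm{Alg}(M_\varphi)=\{M_g:g\in\mathcal{P}\}$; because $\varphi=h(B)$ we have $\mathcal{P}\subseteq H^\infty[B]$. Taking the WOT closure inside $\{M_z\}'$ and translating back to $H^\infty(\mathbb{D})$, this closure corresponds to the $\tau_{wot}$-closure of $\mathcal{P}$, which by Remark~\ref{refesub} equals the weak star closure $\overline{\mathcal{P}}^{*}$; and since $\mathcal{P}\subseteq H^\infty[B]$ with $H^\infty[B]$ weak star closed, automatically $\overline{\mathcal{P}}^{*}\subseteq H^\infty[B]$.

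Finally I would combine the two computations. The operator $M_\varphi$ has the double commutant property iff $\{M_g:g\in\overline{\mathcal{P}}^{*}\}=\{M_g:g\in H^\infty[B]\}$, hence --- as $g\mapsto M_g$ is injective --- iff $\overline{\mathcal{P}}^{*}=H^\infty[B]$, and since the inclusion $\overline{\mathcal{P}}^{*}\subseteq H^\infty[B]$ always holds, iff $\overline{\mathcal{P}}^{*}\supseteq H^\infty[B]$, i.e. iff the polynomials on $\varphi$ are weak star dense in $H^\infty[B]$. This is the claimed equivalence of (a) and (b). I expect the main subtlety to be the justification of the two appeals to Remark~\ref{refesub}: one must check that on a linear subspace of $H^\infty(\mathbb{D})$ the $\tau_{wot}$, $\tau_{\sigma *}$ and weak star closures all coincide --- this is exactly Lemmas~\ref{topologies} and~\ref{trans} together with Remark~\ref{trans2}, none of whose proofs uses univalence of the symbol --- and that $H^\infty[B]$ is closed in each of these topologies, which is what makes clause (b) a well-posed statement.
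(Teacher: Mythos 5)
Your proof is correct, and it reaches the equivalence by a genuinely different organization than the paper's. The paper argues the two implications separately: for a) $\Rightarrow$ b) it assumes the polynomials on $h(B)$ are not weak star dense in $H^\infty[B]$, produces via Remark \ref{refesub} an annihilating functional, extends it by Hahn--Banach to $\mathcal{L}(H^2(\mathbb{D}))$, and contradicts the double commutant property using Corollary \ref{corte}; for b) $\Rightarrow$ a) it runs a fresh transfinite induction on the derived sets $\mathcal{P}_*^{(\alpha)}[B]$ to show $\{M_g : g\in H^\infty[B]\}\subset \overline{\mathrm{Alg}(M_\varphi)}^{WOT}$. You instead compute both sides of the defining identity once and for all: the explicit equality $\{M_\varphi\}''=\{M_B\}''=\{M_g : g\in H^\infty[B]\}$ (combining Corollary \ref{doblen2} with Lemma \ref{corte}, which the paper only uses as two separate one-sided inclusions in the two directions) and the identification $\overline{\mathrm{Alg}(M_\varphi)}^{WOT}=\{M_g : g\in\overline{\mathcal{P}}^{*}\}$ via Remark \ref{refesub}, after which the equivalence is a one-line set comparison. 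This buys a visibly symmetric argument that avoids both the Hahn--Banach extension and the repeated transfinite induction, at the modest cost of having to observe explicitly that $H^\infty[B]$ is weak star closed (so that $\overline{\mathcal{P}}^{*}\subseteq H^\infty[B]$ automatically, making density equivalent to equality) --- a point you correctly supply from the WOT-closedness of the double commutant together with Remark \ref{refesub}. The underlying machinery (Lemmas \ref{topologies} and \ref{trans}, Remark \ref{trans2}, and the fact that none of it needs univalence of the symbol) is the same in both proofs; your version simply factors all of it through the single citation of Remark \ref{refesub}, which is arguably the cleaner way to present the result.
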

\begin{proof}
By way of contradiction let us suppose that $M_\varphi$ has the double commutant property and that  the polynomials on $h(B)$ are not weak star dense in $H^\infty[B]$. As in the proof of Theorem \ref{main1} (see Remark \ref{refesub}) we get that the polynomials on $h(B)$ are not WOT dense in $H^\infty[B]$. Hence,
there exist  functions $f\in H^\infty(\mathbb{D})$ and $\psi(B)\in H^\infty[B]$ such that for any polynomial $p$
\begin{equation}
\label{one}
\int_{0}^{2\pi} p\circ h(B(e^{it})) f(e^{it}) \,dt=0
\end{equation}
and
\begin{equation}
\label{two}
\int_{0}^{2\pi} \psi(B(e^{it})) f(e^{it}) \,dt\neq 0.
\end{equation}

Let us consider the linear functional $\gimel$ defined on the subspace $\{M_{g}\,:\,g\in H^\infty(\mathbb{D})\}$
as $$\gimel(M_g)=\int_{0}^{2\pi} g(e^{it}) f(e^{it})\,dt$$
and the sublinear function $p(T)=\|T\|$. Let us denote $\widetilde{\gimel}$ the Hahn-Banach's sublinear extension of $\gimel$. By (\ref{one}) and (\ref{two})  we have that $\widetilde{\gimel}(M_{p\circ h(B)})=\widetilde{\gimel}(M_{p(\varphi)})=0$ for any polynomial $p$ and $\widetilde{\gimel}(M_{\psi(B)})\neq 0$. On the other hand, by Corollary \ref{corte} we get that  $M_{\psi(B)}\in \{M_{B}\}''=\{M_\varphi\}''$, that is, we arrive to a contradiction, because by hypothesis $M_\varphi$ has the double commutant property.

Assume that the polynomials on $\varphi$ are weak star dense in $H^\infty[B]$. Set $X\in \{M_\varphi\}''$. 
Since $\varphi=h(B)$, we have that  $\{M_{B}\}''=\{M_\varphi\}''$. Therefore,
by Corollary 
\ref{doblen2} we get that 
$X=M_{\psi(B)}$ for some $\psi\in 
H^\infty(\mathbb{D})$. 

Indeed, let us show that for 
any ordinal $\alpha$
$$
\{M_g\,:\,g\in \mathcal{P}_*^{(\alpha)}
[B]\}\subset \overline{
\{M_{p(\varphi)}\, 
p\,\textrm{polynomial}\}}^{WOT}:=\mathcal{M}.
$$
For $\alpha=1$, if $g\in 
\mathcal{P}_*^{(1)}[B]$ there exists 
a sequence of polynomials $p_n$ such 
that
$\|p_n(h(B))\|_\infty \leq C$ and 
$p_n(h(B(a)))\to g(a)$ for any $a\in 
\mathbb{D}$. By Remark \ref{trans2} 
we get that $M_{p_n(h(B))}$ 
converges in the strong operator 
topology to $M_g$, therefore $M_g\in 
\mathcal{M}$.
Now, let us assume that for an 
ordinal $\alpha$ we have
$\{M_g\,:\,g\in 
\mathcal{P}^{(\alpha)}[B]\}\subset 
\mathcal{M}
$. If $g\in \mathcal{P}_*^{(\alpha+1)}$ 
there exists a sequence of functions 
$g_l\in \mathcal{P}_*^{(\alpha)}
[B]$, such that 
$\|g_l\|_\infty<C$ and 
$g_l(a)\to g(a)$ for any $a\in 
\mathbb{D}$. By Remark \ref{trans} 
we have that $M_{g_l}\to M_g$ in the strong operator topology and $M_{g_l}\in \mathcal{M}$ for all $l$, since $\mathcal{M}$ is closed in the weak operator topology we get that $M_g\in \mathcal{M}$. 

Since there exists an ordinal
$\beta$ such that $\mathcal{P}_*^{(\beta)}[B]\supset H^\infty[B]$. We get that 

$$
\{M_{g(B)}\,:\,g\in H^\infty(\mathbb{D})\}\subset 
\mathcal{M}=\overline{
\{M_{p(\varphi)}\, :\,
p\,\textrm{polynomial}\}}^{WOT}.
$$
That is, $X=M_{\psi(B)}\in \overline{
\{M_{p(\varphi)}\,:\,
p\,\textrm{polynomial}\}}^{WOT}$ as we desired to prove.
\end{proof}

\begin{remark}
If we consider $\varphi(z)=z^6$ and $f(z)=z^2$, clearly $\varphi(z)=f(z^3)$. Of course $\varphi(z)=z^6$ has the double commutant property, but the polynomials on $\varphi$ are not weak star dense in $H^{\infty} [z^3]$ and the polynomials on $z^2$ are not weak star dense in $H^\infty(\mathbb{D})$. At first glance, one may think that something is wrong. The problem here is that the representation $\varphi(z)=z^6=f(z^3)$ is not the Thomson-Cowen representation of $\varphi$, because the commutant of $M_\varphi$ is larger than the commutant of $M_{z^3}$. In this case, the representation of $z^6$ is just $z^6=f(z^6)$ when $f(z)=z$. That is, the equality of the commutants $\{M_\varphi\}'=\{M_B\}'$ in the Thomson-Cowen's Theorem is fundamental.
\end{remark}

\begin{corollary}
Assume that $\varphi\in \mathcal{TC}(\mathbb{D})$, 
and $\varphi(z)=h(B)$ with $B$ a finite Blaschke
product.  The following conditions are equivalents:
\begin{enumerate}
\item[a)] $M_\varphi$ has the double commutant property.
\item[b)] $M_h$ has the minimal commutant property.
\end{enumerate}
\end{corollary}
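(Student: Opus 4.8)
The plan is to chain together the two characterizations already in hand, \textbf{Theorem \ref{entera}} and \textbf{Theorem \ref{main1}}, using \textbf{Lemma \ref{sub}} as the bridge between the two ``weak star generator'' conditions living on $H^\infty(\mathbb{D})$ and on $H^\infty[B]$. First I would rewrite condition (a). Since $\varphi=h(B)$ is a Thomson-Cowen factorization, Theorem \ref{entera} says that (a) is equivalent to the assertion that the polynomials on $\varphi=h(B)$ are weak star dense in $H^\infty[B]$; in the terminology fixed just before Lemma \ref{sub}, this is precisely the statement that $h(B)$ is a weak star generator of $H^\infty[B]$.

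Next I would rewrite condition (b). By Theorem \ref{main1} applied to the symbol $h$, the operator $M_h$ has the minimal commutant property if and only if $h$ is a weak star generator of $H^\infty(\mathbb{D})$. So the corollary reduces to the single equivalence
$$
h \text{ is a weak star generator of } H^\infty(\mathbb{D}) \iff h(B) \text{ is a weak star generator of } H^\infty[B].
$$

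This is exactly the content of Lemma \ref{sub}, provided its hypotheses are met by $g=B$. Here the only point to check is that a finite Blaschke product $B$ is an inner function with $B(\mathbb{D})=\mathbb{D}$: both are classical, since $B$ is a proper holomorphic self-map of the disk of degree $\deg B\geq 1$, hence surjective onto $\mathbb{D}$, and $|B|=1$ a.e.\ on $\partial\mathbb{D}$. With these hypotheses verified, Lemma \ref{sub} (with $g=B$ and the symbol called $\varphi$ there replaced by $h$) yields the displayed equivalence, and concatenating the three equivalences gives (a) $\Longleftrightarrow$ (b).

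I do not expect a genuine obstacle here: all the analytic work has been done in Theorems \ref{main1} and \ref{entera} and in Lemma \ref{sub}; the proof is an assembly, and the only elementary fact invoked beyond the cited results is the surjectivity of a finite Blaschke product onto $\mathbb{D}$.
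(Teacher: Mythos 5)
Your proposal is correct and is exactly the intended derivation: the paper states this corollary without proof immediately after Theorem \ref{entera}, and the natural argument is precisely your chain Theorem \ref{entera} $\Rightarrow$ Lemma \ref{sub} (with $g=B$, noting a finite Blaschke product is inner and onto $\mathbb{D}$) $\Rightarrow$ Theorem \ref{main1}. You also correctly read ``$\varphi=h(B)$'' as the Thomson--Cowen factorization, which the remark following Theorem \ref{entera} shows is essential.
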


Now, let us obtain the geometric result for functions in the Thomson-Cowen class. It is well known that factorization $\varphi=h(B)$, is unique in the sense of Moebius modulo maps. We denote by $b(\varphi)$ the Thomson-Cowen order of $\varphi$, that is, the maximal order of the finite Blaschke product $B'$ for which $\varphi=h'(B')$ for some $h'\in H^\infty(\mathbb{D})$.  For maps $\varphi\in H^\infty(\overline{\mathbb{D}})$ it is defined the minimal winding number as
$$
k(\varphi)=\inf
\{n(\varphi(\gamma),\varphi(a))\,:\,\varphi(a)\notin \varphi(\partial \mathbb{D})\}.
$$
The map $\varphi\in H^\infty(\overline{\mathbb{D}})$ is said to have the Minimal Winding number property if $k(\varphi)=b(\varphi)$. Clearly, by Baker-Deddens and Ullmann's result, if $\varphi$ is an entire function, then $\varphi$ has the Minimal Winding number property.

The following result is analogous to Proposition \ref{geometric} for the functions in $H^\infty(\overline{\mathbb{D}})$.

\begin{theorem}
\label{geo2}
Assume that $\varphi\in H^\infty(\overline{\mathbb{D}})$. If there are two points $a,b\in\mathbb{D}$ such that $\varphi(a),\varphi(b)\notin \varphi(\partial \mathbb{D})$ and $n(\varphi(\gamma),\varphi(a))\neq n(\varphi(\gamma),\varphi(b))$, then $M_\varphi$ has not the double commutant property.
\end{theorem}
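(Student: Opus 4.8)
The plan is to reduce the statement to Theorem~\ref{entera} together with the fact, already used in Theorem~\ref{main1}, that a weak star generator of $H^\infty(\mathbb{D})$ must be univalent (\cite[Proposition 3]{sarasondos}). Since the hypothesis on winding numbers forbids $\varphi$ from being constant, $\varphi\in H^\infty(\overline{\mathbb{D}})$ lies in $\mathcal{TC}(\mathbb{D})$, so by the Thomson–Cowen theorem I would fix a factorization $\varphi=h(B)$ with $B$ a finite Blaschke product and $\{M_\varphi\}'=\{M_B\}'$. Because the derivative of a finite Blaschke product does not vanish on $\partial\mathbb{D}$, the map $B$ is locally biholomorphic near the circle, so $h=\varphi\circ B^{-1}$ extends analytically across $\partial\mathbb{D}$; hence $h$ is analytic on $\overline{\mathbb{D}}$, and $h(\gamma)=h(\partial\mathbb{D})=\varphi(\partial\mathbb{D})$ because $B(\partial\mathbb{D})=\partial\mathbb{D}$ and $B(\mathbb{D})=\mathbb{D}$.

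First I would show that the winding number hypothesis forces $h$ to be non-univalent. Put $k=\deg B\geq 1$. Since $B$ restricted to $\gamma$ winds $k$ times about the origin and $\varphi(\gamma)=h(B(\gamma))$, one has $n(\varphi(\gamma),c)=k\,n(h(\gamma),c)$ for every $c\notin\varphi(\partial\mathbb{D})=h(\partial\mathbb{D})$. Applying this with $c=\varphi(a)$ and $c=\varphi(b)$ and using $n(\varphi(\gamma),\varphi(a))\neq n(\varphi(\gamma),\varphi(b))$ yields $n(h(\gamma),\varphi(a))\neq n(h(\gamma),\varphi(b))$. By the argument principle each of these integers counts, with multiplicity, the solutions in $\mathbb{D}$ of $h(w)=\varphi(a)$ (respectively $h(w)=\varphi(b)$); since $\varphi(a)=h(B(a))$ with $B(a)\in\mathbb{D}$ and likewise for $b$, both counts are at least $1$, so the larger of the two is at least $2$. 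Thus for some $c\in\{\varphi(a),\varphi(b)\}$ the function $h-c$ has either two distinct zeros or a zero of multiplicity $\geq 2$ in $\mathbb{D}$, and in either case $h$ is not univalent.

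To conclude I would invoke Lemma~\ref{sub} with $g=B$: a finite Blaschke product is inner and satisfies $B(\mathbb{D})=\mathbb{D}$, so $h$ is a weak star generator of $H^\infty(\mathbb{D})$ if and only if $\varphi=h(B)$ is a weak star generator of $H^\infty[B]$, i.e.\ if and only if the polynomials on $\varphi$ are weak star dense in $H^\infty[B]$. Since $h$ is not univalent, it is not a weak star generator of $H^\infty(\mathbb{D})$ by \cite[Proposition 3]{sarasondos}; hence condition (b) of Theorem~\ref{entera} fails for the Thomson–Cowen factorization $\varphi=h(B)$, and therefore $M_\varphi$ does not have the double commutant property, which is what we want.

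The step I expect to require the most care is keeping the bookkeeping of the factorization straight: one must use throughout the \emph{same} finite Blaschke product $B$ furnished by the Thomson–Cowen theorem, for which $\{M_\varphi\}'=\{M_B\}'$ (and hence $\{M_\varphi\}''=\{M_B\}''$), since both Theorem~\ref{entera} and Lemma~\ref{sub} are sensitive to this choice, as the remark on $z^6=f(z^3)$ illustrates. One should also verify carefully the elementary but genuinely needed claim that $h$ extends analytically to $\overline{\mathbb{D}}$, so that the argument-principle count applied to the curve $h(\gamma)$ is legitimate; granting that, everything else is a direct consequence of the results established in Sections~\ref{secciondos} and~\ref{seccioncinco}.
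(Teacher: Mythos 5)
Your proposal is correct and follows essentially the same route as the paper's proof, which runs the identical chain (Thomson--Cowen factorization $\varphi=h(B)$, Theorem~\ref{entera} together with Lemma~\ref{sub}, Sarason's result that weak star generators are univalent, and the relation $n(\varphi(\gamma),c)=\deg(B)\cdot n(h(\gamma),c)$) in contrapositive form: assuming the double commutant property, $h$ is univalent and hence the winding number is constantly $b(B)$. Your version merely reverses the direction of the argument and supplies more detail on the analytic extension of $h$ across $\partial\mathbb{D}$ and the winding-number bookkeeping, both of which the paper leaves implicit.
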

\begin{proof}
In fact, we can suppose without loss that $\varphi$ is not constant. In such a case $\varphi\in \mathcal{TC}(\mathbb{D})$. Set $\varphi(z)=h(B)$ as the Thomson-Cowen factorization. If $M_\varphi$ has the double commutant property, by Theorem \ref{entera} the polynomials on $h$ are weak star dense in $H^\infty (\mathbb{D})$, therefore, by \cite[Proposition 3]{sarasondos} we find that $h$ is univalent. Therefore, for any $a\in \mathbb{D}$ such that $\varphi(a)\notin \varphi(\partial \mathbb{D})$ we have $n(\varphi(\gamma),\varphi(a))=b(B)$, a contradiction.
\end{proof}
\begin{corollary}
Assume that $\varphi\in  
H^\infty(\overline{\mathbb{D}})$. If $M_\varphi$ 
has the double commutant property then $\varphi$ 
has the Minimal Winding number property. 
\end{corollary}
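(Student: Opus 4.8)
The plan is to read the result off from the proof of Theorem \ref{geo2} once the value of $b(\varphi)$ has been pinned down. If $\varphi$ is constant there is nothing to prove, so assume $\varphi$ is non-constant; then $\varphi\in\mathcal{TC}(\mathbb{D})$ because $\varphi\in H^\infty(\overline{\mathbb{D}})$, and we may fix a Thomson--Cowen factorization $\varphi=h(B)$ with $B$ a finite Blaschke product and $\{M_\varphi\}'=\{M_B\}'$. Since this factorization is unique in the Moebius sense, the order of $B$ is the maximal order of a finite Blaschke product $B'$ with $\varphi=h'(B')$; that is, $b(\varphi)=\operatorname{ord}(B)$. This identification is the one genuinely non-routine point, and I would isolate it as a preliminary remark.

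Next, assume $M_\varphi$ has the double commutant property. First I would invoke Theorem \ref{entera}, so that the polynomials on $\varphi$ are weak star dense in $H^\infty[B]$; by Lemma \ref{sub} (applicable since a finite Blaschke product satisfies $B(\mathbb{D})=\mathbb{D}$) this means $h$ is a weak star generator of $H^\infty(\mathbb{D})$, whence $h$ is univalent by \cite[Proposition 3]{sarasondos}. Moreover, since $\varphi\in H^\infty(\overline{\mathbb{D}})$ and $B$ is a finite Blaschke product carrying $\overline{\mathbb{D}}$ onto $\overline{\mathbb{D}}$, the function $h$ is analytic on $\overline{\mathbb{D}}$, so $h(\gamma)$ is a Jordan curve bounding the Jordan domain $h(\mathbb{D})$.

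The final step is the winding-number bookkeeping already performed inside the proof of Theorem \ref{geo2}: the set $\{a\in\mathbb{D}:\varphi(a)\notin\varphi(\partial\mathbb{D})\}$ is non-empty (as $\varphi(\mathbb{D})$ is open while $\varphi(\partial\mathbb{D})$ is a rectifiable curve of planar measure zero), and for any such $a$, writing $\zeta=B(a)\in\mathbb{D}$ one has $\varphi(a)=h(\zeta)\notin h(\partial\mathbb{D})=\varphi(\partial\mathbb{D})$, so $n(h(\gamma),h(\zeta))=1$ by univalence of $h$; since $B|_\gamma$ sweeps the unit circle exactly $\operatorname{ord}(B)$ times, multiplicativity of the winding number under composition gives that $\varphi(\gamma)=h(B(\gamma))$ winds $\operatorname{ord}(B)\cdot 1$ times about $\varphi(a)$. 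Hence $n(\varphi(\gamma),\varphi(a))=\operatorname{ord}(B)=b(\varphi)$ for every admissible $a$, so $k(\varphi)=b(\varphi)$ and $\varphi$ has the Minimal Winding number property. The main obstacle is thus not analytic but bookkeeping: checking that the Blaschke factor in the Thomson--Cowen factorization has maximal order, i.e. $b(\varphi)=\operatorname{ord}(B)$, which rests on the uniqueness modulo Moebius maps of that factorization; everything else is the argument-principle computation borrowed from Theorem \ref{geo2}.
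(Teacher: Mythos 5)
Your proposal is correct and follows essentially the same route as the paper, which states this corollary without a separate proof as a direct consequence of (the proof of) Theorem \ref{geo2}: the double commutant property forces $h$ in the Thomson--Cowen factorization $\varphi=h(B)$ to be univalent, whence $n(\varphi(\gamma),\varphi(a))=\operatorname{ord}(B)=b(\varphi)$ for every admissible $a$. Your explicit identification $b(\varphi)=\operatorname{ord}(B)$ via uniqueness of the factorization, and the argument-principle computation, are exactly the details the paper leaves implicit.
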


\begin{corollary}
Assume that $\varphi\in H^\infty(\overline{\mathbb{D}})$ and that $\varphi$ acting on $\partial\mathbb{D}$ is a Jordan curve. Then $M_\varphi$ has the double commutant property if and only if $n(\varphi(\gamma),\varphi(a))$ is constant for each $a\in\mathbb{D}$ such that $\varphi(a)\notin \varphi(\partial \mathbb{D})$.
\end{corollary}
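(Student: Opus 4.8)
The plan is to derive this corollary directly from Theorem \ref{geo2} together with Corollary \ref{geometric2}'s analogue in the Thomson-Cowen setting, exploiting the extra rigidity that comes from assuming $\varphi|_{\partial\mathbb{D}}$ is a Jordan curve. First I would dispose of the trivial constant case, so assume $\varphi$ non-constant; then $\varphi\in\mathcal{TC}(\mathbb{D})$ and we may write the Thomson-Cowen factorization $\varphi=h(B)$ with $B$ a finite Blaschke product of order $b(B)$ and $\{M_\varphi\}'=\{M_B\}'$.

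For the forward implication ($M_\varphi$ has the double commutant property $\Rightarrow$ winding number constant), this is exactly Theorem \ref{geo2} read contrapositively: if the winding number were not constant, Theorem \ref{geo2} already gives that $M_\varphi$ fails the double commutant property. So the real content is the converse. Here I would argue: suppose $n(\varphi(\gamma),\varphi(a))$ equals a constant value $p$ for all $a\in\mathbb{D}$ with $\varphi(a)\notin\varphi(\partial\mathbb{D})$. By the Argument Principle, $p$ counts (with multiplicity) the number of preimages in $\mathbb{D}$ of such a point; since $\varphi|_{\partial\mathbb{D}}$ is a Jordan curve, $\varphi(\partial\mathbb{D})$ is a simple closed curve bounding a Jordan domain $\Omega$, and every point of $\Omega$ has winding number $p$ while every point outside has winding number $0$. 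Thus $\varphi$ maps $\mathbb{D}$ onto $\Omega$ as a proper $p$-to-$1$ map. Comparing with the factorization $\varphi=h(B)$: for a point $w\in\Omega$, its $\varphi$-preimages are exactly $B^{-1}(h^{-1}(w))$, so $p = b(B)\cdot\#\{h^{-1}(w)\cap\mathbb{D}\}$ for generic $w$, forcing $h$ to be univalent on $\mathbb{D}$ (the number of $h$-preimages is constantly $p/b(B)=1$). Moreover $h$ maps $\mathbb{D}$ univalently onto $\Omega$, a Jordan domain. By Walsh's Theorem (quoted in Section \ref{secciondos}), the polynomials on $h$ are then weak star dense in $H^\infty(\mathbb{D})$. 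Applying Theorem \ref{entera}, condition (b) there — polynomials on $\varphi=h(B)$ weak star dense in $H^\infty[B]$ — follows from Lemma \ref{sub} (with $g=B$; note $B(\mathbb{D})=\mathbb{D}$ since $B$ is a finite Blaschke product), and hence $M_\varphi$ has the double commutant property.

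The step I expect to be the main obstacle is the clean identification that the Jordan curve hypothesis plus constant winding number forces $h$ to be \emph{univalent} rather than merely finite-to-one with constant multiplicity. One must be careful about boundary behavior: $\varphi(\partial\mathbb{D})$ being a Jordan curve means $\varphi$ is continuous and injective on $\partial\mathbb{D}$, but the interior map could still in principle be multi-valent in a way compatible with a simple boundary curve — this is precisely where the winding-number hypothesis is used to pin down the multiplicity to be globally constant, and then the factorization $\varphi=h(B)$ distributes that multiplicity between $B$ (which contributes exactly $b(B)$) and $h$ (which must contribute $1$). A secondary technical point is checking that $h$ actually extends continuously and injectively to $\overline{\mathbb{D}}$ so that Walsh's Theorem applies in the stated form; since $B$ is a finite Blaschke product it extends analytically across $\partial\mathbb{D}$, and $\varphi\in H^\infty(\overline{\mathbb{D}})$, so $h=\varphi\circ(\text{local inverse of }B)$ inherits the needed regularity on the boundary, and the Jordan property transfers from $\varphi(\partial\mathbb{D})$ to $h(\partial\mathbb{D})=\varphi(\partial\mathbb{D})$.
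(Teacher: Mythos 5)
Your overall route coincides with the paper's: necessity straight from Theorem \ref{geo2}, and sufficiency by showing that the outer factor $h$ in the Thomson--Cowen factorization $\varphi=h(B)$ is univalent onto a Jordan domain, then invoking Walsh's theorem, Lemma \ref{sub} and Theorem \ref{entera}. The forward implication, the use of the argument principle to see that $\varphi$ maps $\mathbb{D}$ onto the Jordan domain $\Omega$ with constant valence $p$, the count $p=b(B)\cdot\#\bigl(h^{-1}(w)\cap\mathbb{D}\bigr)$, and the final chain Walsh $\to$ Lemma \ref{sub} $\to$ Theorem \ref{entera} are all sound and are exactly what the paper's (very terse) proof relies on.

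The genuine gap is the step you yourself flag and then do not close: the assertion that the constant $h$-valence $p/b(B)$ ``must contribute $1$.'' Nothing you have written rules out a priori that $h$ has constant valence $q=p/b(B)>1$ on $\Omega$; constancy of the winding number only pins down the \emph{total} multiplicity $p$, not how it splits between $B$ and $h$. To close this you need to use the maximality built into the Thomson--Cowen factorization. Concretely: since $B$ extends analytically across $\partial\mathbb{D}$ with nonvanishing derivative there and $\varphi\in H^\infty(\overline{\mathbb{D}})$, the factor $h$ extends continuously to $\overline{\mathbb{D}}$ with $h(\partial\mathbb{D})=\varphi(\partial\mathbb{D})=J$; hence $h$ is a \emph{proper} map of $\mathbb{D}$ onto $\Omega$ of constant degree $q$, and composing with a Riemann map $\sigma:\Omega\to\mathbb{D}$ gives a proper degree-$q$ self-map of $\mathbb{D}$, i.e. $h=\sigma^{-1}\circ B_q$ for a finite Blaschke product $B_q$ of order $q$. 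If $q>1$ this yields $\varphi=\sigma^{-1}\bigl(B_q\circ B\bigr)$ with $B_q\circ B$ of order $qb(B)>b(B)$, whence $\{M_\varphi\}'\supset\{M_{B_q\circ B}\}'\supsetneq\{M_B\}'$, contradicting the defining property $\{M_\varphi\}'=\{M_B\}'$ of the Thomson--Cowen factorization. This forces $q=1$, and the rest of your argument then goes through. (For fairness: the paper's own justification of this point, ``By Theorem \ref{entera} we see that $f$ is univalent,'' is no more complete than yours; but since Theorem \ref{entera} as stated does not deliver univalence in the sufficiency direction, the properness/maximality argument above is what is actually needed.)
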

\begin{proof}
The necessity part follows from Theorem \ref{geo2}. Now, we assume that $\varphi(z)=f(B)$. By Theorem \ref{entera} we see that $f$ is univalent. Since the boundary of $f(\mathbb{D})$ is a Jordan curve, we find that the polynomials on $f$ are weak star dense in $H^\infty(\mathbb{D})$, therefore, by Walsh Theorem $M_\varphi$ has the minimal commutant property. By Theorem \ref{entera} again, we find that $M_\varphi$ has the double commutant property as we wanted. 
\end{proof}

The following example shows that the geometric condition in Theorem \ref{geo2}, in general, is not sufficient to guarantee the double commutant property
of an analytic Toeplitz operator. 
\begin{example}
Let $f$ be a univalent function that maps $\mathbb{D}$ to the slit disk $\mathbb{D}\setminus [0,1)$. If we
consider $\varphi(z)=f(z^p)$ we have that for any $c\in 
\varphi(\mathbb{D})$, $M_\varphi$ do not have the double commutant property.
\end{example}

\section{Concluding remarks and open questions}

There are examples of bounded analytic functions that are not in the Thomson-Cowen class, such as any infinite Blaschke product. This example was provided by the referee $\exp(-s)$ where $s=(1-z)/(1+z)$. What is not at all trivial is whether the functions of the disk algebra are included in the Thomson-Cowen class (see in \cite[p. 55]{chinos2} this question formulated). A question that awaits to be answered is characterizing the elements of the disk algebra that induce multiplication operators with the double commutant property.

The necessary condition on the winding number is somewhat unexpected. We would like to know whether it is sufficient or not for the case of entire functions. We suspect that it is not a sufficient condition. A good counterexample is to find an entire function that takes the disk to a moon-shaped region such that the kissing point has very strong contact (see \cite[p. 22]{gaier}). Professor Alicia Cantón Pire suggested a good starting point for this question by considering the exponential function $\exp(\pi i z)$. This function is injective on the unit disk and takes the unit disk to a moon-shaped region with a very strong contact in the kissing point.

\vskip .5cm
\textbf{Acknowledgments.}
The authors thank the referees who, although the first version was full of typos, believed in this work. Also,thanks to Professor Alicia Cantón Pire for her time, listening to our questions.

\vskip .5cm

\textbf{Funding information.} M.J.G. was supported in part by the Spanish Ministerio de Ciencia e Innovación (grant no. PID2021-123151NB-I00).
This publication is part of the project PID2022-139449NB-I00, funded by: \\ MCIN/AEI/10.13039/501100011033/FEDER, UE.
The authors were supported by
Grant “Operator Theory: an interdisciplinary approach,” 
reference ProyExcel$\_$00780, a project financed in the 2021 call for Grants for
Excellence Projects, under a competitive biddingregime, aimed at entities qualified
as Agents of the Andalusian Knowledge System, within the scope of the Andalusian Research, Development, and Innovation Plan (PAIDI 2020). Counseling of the
University, Research and Innovation of the Junta de Andalucía.

\section*{Availability of data and material}
Not applicable.

\section*{Ethics Declarations}
Not applicable.
\section*{Author's contribution}
The author contributed significantly to analysis and manuscript preparation and she helped perform the analysis with constructive discussions. 
\section*{Competing interests}
The authors declare that they have no competing interests.

\end{document}